\documentclass[12pt,a4paper]{article}
\usepackage{amsmath,amssymb,amsthm,amsfonts}
\usepackage{graphicx}
\usepackage{hyperref}
\usepackage{mathtools}
\usepackage{enumitem}
\usepackage{xcolor}
\hypersetup{
  colorlinks=true,
  linkcolor=blue!50!black,
  citecolor=blue!50!black,
  urlcolor=blue!50!black,
  pdftitle={Discreteness of the Steklov Spectrum for Exterior Non-Compact Free-Boundary Minimal Surfaces with Regular Ends of Finite Total Curvature},
  pdfauthor={Igor Kozyrev},
  pdfsubject={Spectral geometry; Steklov eigenvalue problem; free-boundary minimal surfaces},
  pdfkeywords={Steklov spectrum, exterior free-boundary minimal surface, conformal compactification, Krein formula, Lagrangian normalization, compact resolvent}
}
\newtheorem{theorem}{Theorem}[section]
\newtheorem{lemma}[theorem]{Lemma}
\newtheorem{proposition}[theorem]{Proposition}
\newtheorem{corollary}[theorem]{Corollary}
\newtheorem{definition}[theorem]{Definition}
\newtheorem{remark}[theorem]{Remark}
\newtheorem{example}[theorem]{Example}

\title{Discreteness of the Steklov Spectrum for Exterior Non-Compact Free-Boundary Minimal Surfaces with Regular Ends of Finite Total Curvature}
\author{Igor Kozyrev\\[4pt]
{\normalsize Independent University of Moscow}\\
{\normalsize Bolshoy Vlasyevskiy per.~11, Moscow 119002, Russia}\\
{\normalsize\texttt{ikozyrev@gmail.com}}}
\date{}

\makeatletter
\renewcommand{\maketitle}{\par
  \begingroup
    \renewcommand\thefootnote{\@fnsymbol\c@footnote}%
    \def\@makefnmark{\rlap{\@textsuperscript{\normalfont\@thefnmark}}}%
    \long\def\@makefntext##1{\parindent 1em\noindent
            \hb@xt@1.8em{%
                \hss\@textsuperscript{\normalfont\@thefnmark}}##1}%
    \newpage
    \global\@topnum\z@
    \@maketitle
    \@thanks
  \endgroup
  \setcounter{footnote}{0}%
  \global\let\thanks\relax
  \global\let\maketitle\relax
  \global\let\@maketitle\relax
  \global\let\@thanks\@empty
  \global\let\@author\@empty
  \global\let\@date\@empty
  \global\let\@title\@empty
  \global\let\title\relax
  \global\let\author\relax
  \global\let\date\relax
  \global\let\and\relax
}
\def\@maketitle{%
  \vspace*{-6em}%
  \begin{center}%
  \let \footnote \thanks
    {\LARGE \@title \par}%
    \vskip 1em%
    {\large \@author}%
  \end{center}%
  \par
  \vskip 1em}
\makeatother

\begin{document}

\maketitle

\begin{abstract}\sloppy
We study the Steklov problem on non-compact exterior free-boundary minimal surfaces. Boundary values need not determine a unique harmonic extension, so the operator also requires a prescription at infinity. For proper surfaces in $\mathbb R^3$ with compact boundary and finitely many regular ends of finite total curvature, we construct a natural class of such prescriptions. Every resulting operator is self-adjoint with compact resolvent; consequently, its spectrum is discrete, bounded below, and tends to $+\infty$. If the coordinate functions have linearly independent boundary traces, the prescription can be chosen so that these traces are eigenfunctions with eigenvalue $-1$.
\end{abstract}

\section{Introduction}

The Steklov eigenvalue problem \cite{Steklov1902} places the spectral parameter in the boundary condition: for a Riemannian manifold $\Omega$ with sufficiently smooth boundary, one seeks $u$ and $\sigma$ with

\begin{align}
\Delta u &= 0 \quad \text{in } \Omega \\
\frac{\partial u}{\partial \nu} &= \sigma u \quad \text{on } \partial\Omega,
\end{align}
where $\Delta$ is the Laplace--Beltrami operator and $\nu$ is the outward unit conormal to the boundary (see Section~\ref{sec:notation}). This problem is equivalent to finding the spectrum of the Steklov operator $\mathcal{S}$ (defined precisely in Section~\ref{sec:steklov_def}), which maps a function $f$ on the boundary to the conormal derivative of its normalized harmonic extension into the domain.

For compact connected manifolds with sufficiently smooth boundary, the Steklov spectrum is discrete and consists of a sequence $0=\sigma_0<\sigma_1\leq\sigma_2\leq\cdots\to\infty$. On a non-compact manifold the harmonic extension need not be unique until behavior at infinity is prescribed; the resulting spectrum depends on that choice and on the geometry at infinity.

A foundational connection between the Steklov eigenvalue problem and minimal surfaces was established by Fraser and Schoen \cite{FraserSchoen2011,FraserSchoen2016}. They proved that metrics maximizing the first non-zero Steklov eigenvalue (normalized by boundary length) on a compact oriented surface of genus zero with $k\geq2$ boundary components, or on the M\"obius band, admit a branched conformal free-boundary minimal immersion into a Euclidean unit ball \emph{by first eigenfunctions}, the maximizing metric being a $\sigma$-homothety of the induced one --- conformal to it, with conformal factor constant on the boundary \cite[Theorem~5.16]{FraserSchoen2016}. The distinction matters: a conformal change $e^{2h}g$ with $h$ vanishing near $\partial\Sigma$ leaves both the Steklov operator and the boundary length untouched while changing the interior factor arbitrarily, so ``induced after one global rescaling'' would be too strong. Such an immersion is minimal and meets the boundary sphere orthogonally, and its coordinate functions are Steklov eigenfunctions with eigenvalue~$1$ \cite[Lemma~2.2]{FraserSchoen2011}. For a maximizer these are first eigenfunctions by construction, so that $\sigma_1 = 1$; for a free-boundary minimal surface not known to be a maximizer this is instead a conjecture of Fraser and Li \cite{FraserLi2014}, stated there for compact \emph{properly embedded} minimal hypersurfaces of the ball and verified for the model examples; for embedded surfaces of genus zero in $\mathbb B^3$ it has been proved by Chodosh and Gianocca \cite{ChodoshGianocca2026}, who deduce that the critical catenoid is the unique embedded free-boundary minimal annulus. The embeddedness hypothesis cannot be dropped: on the $m$-fold immersed cover of the critical catenoid, $m\geq2$, parametrized by $t\in[-T,T]$ and $\theta\in\mathbb R/2\pi m\mathbb Z$ with $T\tanh T=1$, the function $\cosh(t/m)\cos(\theta/m)$ is harmonic and Steklov with eigenvalue $\tanh(T/m)/(m\tanh T)\in(0,1)$, while the coordinates still have eigenvalue $1$; so the coordinates are not first eigenfunctions there --- which is why the rigidity theorems of \cite{FraserSchoen2016} take ``immersed by first eigenfunctions'' as a hypothesis.

In this paper, we consider a complementary setting: \emph{exterior} free-boundary minimal surfaces $\Sigma\subset\mathbb R^3\setminus B^3$ that lie outside the open unit ball, have $\partial\Sigma\subset S^2$, and meet the sphere orthogonally from the outside; we abbreviate ``free-boundary minimal surface'' to FBMS. Mazet and Mendes proved that every properly embedded exterior FBMS with one regular end is a rotational surface $C_\alpha$ (see Section~\ref{subsec:mazet_mendes}), and obtained related rigidity for stable surfaces with parallel regular ends \cite{MazetMendes2022}. Our focus is the spectral-theoretic side. The outward conormal along $\partial\Sigma$ points toward the ball center, so $\nu=-\mathbf x$ and $\partial_\nu x_i=-x_i$. Because harmonic extension is non-unique, this boundary identity makes the coordinate traces eigenfunctions with eigenvalue $-1$ only for a normalization that retains the coordinate functions. Such a normalization exists when their boundary traces are linearly independent. Other normalizations can have no negative spectrum.

In a related direction, Medvedev and Morozov studied the Fraser--Sargent exterior surfaces $EFS_{k,l}\subset\mathbb R^4\setminus B^4$, which are codimension-two examples outside the scope of the Mazet--Mendes rigidity, and proved that they are stable \cite[Theorem~1.1]{MedvedevMorozov2022}. In~\cite[§1.2, item~6]{MedvedevMorozov2022v2} they note that non-compactness leaves the discreteness of the Steklov spectrum, and hence a spectral index, unresolved. The present paper constructs self-adjoint realizations with discrete spectrum once behavior at infinity is fixed. Each realization therefore has a finite counting function below every fixed threshold; choosing a geometrically meaningful threshold for an exterior spectral index is a separate, normalization-dependent question. The core analysis is carried out for hypersurfaces in $\mathbb R^3\setminus B^3$, while Section~\ref{sec:higher_codim} proves the intrinsic discreteness theorem for complete Riemannian surfaces with intrinsic regular ends, at any choice of orders. It then specializes the coordinate-eigenfunction statement to exterior FBMS in arbitrary codimension. The intrinsic theorem applies, in particular, to every $EFS_{k,l}$; Section~\ref{subsec:FS_example} states separately which ambient coordinates are admissible at a given order.

More precisely, in $\mathbb R^3$ we assume finitely many regular ends of finite total curvature. Huber--Osserman theory gives the punctured conformal structure and meromorphic data \cite{Huber1957,Osserman1964}; for the properly embedded annular multiplicity-one ends considered here, Schoen's theorem supplies the graph expansion and the catenoidal or planar alternatives \cite[Proposition~1]{Schoen1983}. That asymptotic analysis is the geometric input of this paper, and it is all that the ends contribute: once each end is known to be a conformal punctured disk on which the coordinate functions have at most a simple principal part, the spectral theory is elementary. In dimension two harmonicity and conormal flux are conformally invariant and bounded harmonic functions extend across punctures, so the punctures can simply be filled. The Steklov operator of the resulting \emph{compact} surface is classical, and every admissible realization differs from it by a symmetric operator of finite rank.

For the Laplace spectrum on non-compact minimal surfaces, discreteness phenomena were studied by Bessa--Jorge--Montenegro \cite{BessaJorgeMontenegro2010} and Bessa--Jorge--Mari \cite{BessaJorgeMari2015}; in the Steklov setting the problem is more intricate because the eigenvalue parameter occurs at the boundary.

Recently, Bundrock, Girouard, Grebenkov, Levitin, and Polterovich systematically studied the exterior Steklov problem on flat Euclidean domains $\mathbb R^n\setminus\bar\Omega$ \cite{BGGLP2026}. Their far-field condition $u(x)=O(|x|^{2-n})$ --- boundedness for $n=2$ and decay for $n\geq3$ --- makes harmonic extension unique but excludes the linearly growing coordinate functions. Our one-end coordinate normalization retains those functions. These choices are different self-adjoint realizations and already produce different spectra on the exterior of the equatorial disk (Remark~\ref{rem:comparison_BGGLP}).

Our main results are the following.

\textbf{Theorem~\ref{thm:krein}} (Section~\ref{sec:steklov_weighted}).
\emph{Every symmetric Steklov realization obtained by a linear selection of admissible harmonic extensions is $S_M=S_0+\Psi M\Psi^\top$ for a unique symmetric matrix $M$, where $S_0$ is the Dirichlet-to-Neumann map of the conformal compactification and $\Psi$ is built from the conormal derivatives along $\partial\Sigma$ of the harmonic functions with zero boundary values and prescribed principal part at one puncture. The Lagrangian complements of the Dirichlet-kernel data correspond bijectively to symmetric matrices.}

\textbf{Theorem~\ref{thm:main_body}} (Section~\ref{sec:proof_discrete}).
\emph{For a properly embedded exterior FBMS with compact boundary and one regular end, every Lagrangian normalization gives a self-adjoint Steklov operator with compact resolvent and discrete spectrum bounded below and tending to $+\infty$. Unless the surface lies in a plane through the origin, exactly one normalization makes the coordinate traces eigenfunctions with eigenvalue $-1$, and for it the negative spectrum is exactly $\{-1\}$ with multiplicity three.}

More generally, the discreteness part holds for any finite number of ends, and the counting functions of all realizations differ from that of $S_0$ by at most the rank of $M$; with several ends the coordinate-compatible normalization is no longer unique (Section~\ref{sec:multiple_ends}).

\textbf{Theorem~\ref{thm:multiple_ends}} (Section~\ref{sec:multiple_ends}).
\emph{For an exterior FBMS with compact boundary and finitely many regular ends, every Lagrangian normalization gives a self-adjoint Steklov operator with compact resolvent and discrete spectrum; a coordinate-compatible normalization always exists, and for it every non-zero coordinate trace is an eigenfunction with eigenvalue $-1$; all three traces are non-zero unless the surface lies in a plane through the origin.}

Compactness of the resolvent rests on the compactness of $\partial\Sigma$ and on the ends being finitely many conformal punctured disks, through the corresponding property of the Dirichlet-to-Neumann map of the compactified surface. Within the class considered, the spectral conclusion is therefore independent of the number and type of ends; what the ends control is the dimension of the family of realizations.

The present paper is a first step toward an exterior Fraser--Schoen theory. After fixing a coordinate-compatible normalization, let $\sigma_{-1}$ be the largest negative eigenvalue of the resulting Steklov operator. A candidate functional is $|\sigma_{-1}|\operatorname{Length}(\partial\Sigma)$. With several ends no canonical coordinate-compatible normalization is selected, so the metric alone does not determine this quantity. For a fixed realization, compact resolvent makes $\sigma_{-1}$ an isolated eigenvalue and permits variational methods; developing the extremal theory is left to future work.

The paper is organized as follows. Section~\ref{sec:strategy} outlines the strategy. Section~\ref{sec:notation} fixes the setting and the regular ends, and Appendix~\ref{app:huber} their conformal structure; this is the geometric part. Section~\ref{sec:compactification} fills the punctures and describes the admissible harmonic functions, Section~\ref{sec:steklov_weighted} proves the Kre\u\i n formula and the Lagrangian dictionary, and Section~\ref{sec:proof_discrete} proves the spectral theorem and Theorem~\ref{thm:main_body}. Section~\ref{sec:applications} discusses examples. Section~\ref{sec:multiple_ends} records the multi-end case, which needs no new analysis; Section~\ref{sec:higher_codim} treats arbitrary codimension and the Fraser--Sargent surfaces. The appendix contains the deferred geometric details.

\section{Strategy of the Proof}\label{sec:strategy}
We treat the Steklov operator as an \emph{unbounded} operator with \emph{compact resolvent}, not as a compact operator. The argument has five steps, of which only the first is geometric.
\begin{enumerate}
  \item \textbf{The ends are conformal punctured disks.} Finite total curvature and the regular-end expansion give, on each end, a conformal coordinate $\zeta$ in which the end is $\{0<|\zeta|<\varepsilon\}$ and the coordinate functions of an exterior FBMS have at most the principal parts $-\log|\zeta|$, $\operatorname{Re}\zeta^{-1}$, $\operatorname{Im}\zeta^{-1}$ (Section~\ref{sec:notation}, Lemma~\ref{lem:conformal_end}).
  \item \textbf{Fill the punctures.} Harmonicity is conformally invariant in dimension two and bounded harmonic functions extend across a puncture, so the surface compactifies to $(\overline\Sigma,\hat g)$ with $\hat g=g$ near the boundary. The admissible harmonic functions are those with a prescribed principal part at each puncture; they form a space $\mathcal H$ with $\dim K=N$ Dirichlet kernel, spanned by explicit correctors (Section~\ref{sec:compactification}).
  \item \textbf{Kre\u\i n formula.} Every linear selection of admissible extensions gives $S=S_0+\Psi G$ with $S_0$ the Dirichlet-to-Neumann map of $(\overline\Sigma,\hat g)$, and $S$ is symmetric exactly when $G=M\Psi^\top$ with $M$ symmetric (Theorem~\ref{thm:krein}). The Lagrangian complements of the kernel data are in bijection with such $M$ (Proposition~\ref{prop:boundary_normalization}).
  \item \textbf{Spectrum.} $S_0$ is a classical elliptic pseudodifferential operator of order one on the compact boundary, self-adjoint with compact resolvent; $\Psi M\Psi^\top$ is bounded, symmetric and smoothing. Hence every $S_M$ inherits self-adjointness, the principal symbol, compactness of the resolvent and discreteness, and Glazman's lemma bounds the counting functions against each other (Theorem~\ref{thm:spectral}).
  \item \textbf{Coordinates.} The free-boundary identity $\partial_\nu x_i=-x_i$ makes the coordinate data isotropic, with zero intersection with the kernel data, so a coordinate-compatible Lagrangian always exists; it is unique exactly when the coordinate data fill the available dimension, and then the negative spectrum is exactly $\{-1\}$ (Proposition~\ref{prop:coordinate_eigenfunctions}).
\end{enumerate}

Neither weighted Sobolev spaces, which \cite[Remark~3.7]{BGGLP2026} mention as a possible route to uniqueness of harmonic extensions, nor Fredholm theory on them is used. The treatment by conformal maps to punctured disks, as in \cite{BGGLP2026}, which removes the need for the weighted machinery, was suggested by I.~Polterovich; see the Acknowledgements.

\section{Setup and Preliminaries}\label{sec:notation}

We fix the sign conventions, exterior geometry, and regular-end coordinates used in the analytic construction.

\paragraph{Asymptotic notation.}
For non-negative quantities $F$ and $G$ depending on a variable tending to infinity, we write $F\lesssim G$ if $F\leq CG$ for some constant $C>0$ independent of the asymptotic variable, and $F\asymp G$ if both $F\lesssim G$ and $G\lesssim F$. We write $F\sim G$ if $G$ is non-zero for all sufficiently large values of the variable and $F/G\to1$. When angular variables are present, these estimates and limits are uniform in those variables unless stated otherwise. Constants may depend on the fixed surface, end, or differentiation order, but not on the variable tending to infinity; in estimates involving a varying function, independence of that function is stated when needed. The symbol over an arrow, as in $K\xrightarrow{\sim}\mathbb R^3$, denotes an isomorphism and is unrelated to asymptotic equivalence.

\subsection{Geometric setting}

We work in Euclidean space $\mathbb{R}^3$ with the standard inner product $\langle \cdot, \cdot \rangle$. We denote by $B^3 = \{\mathbf{x} \in \mathbb{R}^3 : |\mathbf{x}| < 1\}$ the open unit ball and by $S^2 = \partial B^3$ the unit sphere. The position vector of a point $p \in \mathbb{R}^3$ is denoted $\mathbf{x}(p) = p$, and we write $\mathbf{x} = (x_1, x_2, x_3)$ for the coordinate functions. We use $e_1, e_2, e_3$ for the standard basis vectors.

\subsection{Surfaces and the Laplace--Beltrami operator}

Throughout, $(\Sigma, g)$ denotes a smooth oriented Riemannian surface (possibly with boundary and non-compact), where $g$ is the induced metric from $\mathbb{R}^3$. We use the \textbf{geometer's sign convention} for the Laplace--Beltrami operator:
\begin{equation}\label{eq:laplace_beltrami}
\Delta_g f = \frac{1}{\sqrt{\det g}} \sum_{i,j} \frac{\partial}{\partial x^i}\left(\sqrt{\det g}\, g^{ij} \frac{\partial f}{\partial x^j}\right) = \mathrm{div}_g(\nabla_g f),
\end{equation}
so that the eigenvalues of $-\Delta_g$ on a closed manifold are non-negative, and $\Delta_g f = 0$ means $f$ is \emph{harmonic}.

The unit normal is denoted $\mathbf n$. For an immersion $X$, minimality is equivalent to $\Delta_gX=0$.

\subsection{Regular ends and polar coordinates}\label{subsec:planar_ends}\label{subsec:polar_coords}

Following Mazet and Mendes \cite[Section~2.2]{MazetMendes2022}, we use Schoen's notion of a \emph{regular end}~\cite{Schoen1983}. In this subsection $\Sigma \subset \mathbb{R}^3$ is a hypersurface and we write $P=(X,z)\in\mathbb{R}^2\times\mathbb{R}$. An end $E$ is regular if, after a Euclidean isometry, it is the graph of a bounded-gradient function on $\{|X|\geq R\}$ with
\begin{equation}\label{eq:regular_end_n2}
f(X) = A \log |X| + B + \langle C, X \rangle |X|^{-2} + O(|X|^{-2}),
\end{equation}
where $A,B\in\mathbb{R}$ and $C\in\mathbb{R}^2$. Writing
\[
R_f(X):=f(X)-A\log|X|-B-\langle C,X\rangle|X|^{-2},
\]
the differentiated form of the expansion means that, for every multi-index $\alpha$,
\[
|D^\alpha R_f(X)|\leq C_\alpha |X|^{-2-|\alpha|}
\qquad (|X|\ \text{large}).
\]
These derivative bounds are part of the regular-end hypothesis used below. They give $|\mathrm{II}|=O(r^{-2})$ and the coordinate expansions of Lemma~\ref{lem:conformal_end}. Finite total curvature gives the punctured conformal type through Huber--Osserman, but does not by itself give the graph expansion. For a properly embedded annular multiplicity-one end of finite total curvature, the expansion is supplied by \cite[Proposition~1]{Schoen1983}; the finite-total-curvature assumption is part of that proposition. This is the regularity hypothesis used in the $\mathbb{R}^3$ results below. Higher codimension uses the separate intrinsic Definition~\ref{def:intrinsic_end}.

Since $dA \asymp r\,dr\,d\phi$ and minimality gives $|K|=\tfrac12|\mathrm{II}|^2=O(r^{-4})$, every regular end has finite total curvature. Hence a surface with compact boundary and finitely many regular ends satisfies the Huber--Osserman hypothesis (Appendix~\ref{app:huber}, Lemma~\ref{lem:conformal_end}).

On such an end we use \textbf{polar coordinates} $(r, \phi)$, $X = (r\cos\phi, r\sin\phi)$, $r = |X| \geq R$: the map $(r, \phi) \mapsto (r\cos\phi, r\sin\phi, f(r,\phi))$ is a coordinate chart on the end; the conformal coordinate of Lemma~\ref{lem:conformal_end} is related to it by $\log r=-\log|\zeta|+O(1)$.

A \emph{catenoidal end} is a regular end with $A\neq0$; in polar coordinates,

\begin{equation}\label{eq:catenoidal_polar}
z = a \log r + b + O(r^{-1}) \quad \text{as } r \to \infty,
\end{equation}
where $a\neq0$ is the logarithmic growth rate.

A \emph{planar end} is a regular end with $A=0$. The defining expansion then gives
\begin{equation}
z = b + c(\phi)\,r^{-1} + O(r^{-2}) \quad \text{as } r \to \infty,
\end{equation}
where $b$ is a constant and $c(\phi) = c_1\cos\phi + c_2\sin\phi$ is the coefficient coming from the term $\langle C, X\rangle |X|^{-2}$ of~\eqref{eq:regular_end_n2}. The coordinate function $z$ is bounded on the end (in contrast to the logarithmic growth on a catenoidal end).

The exterior of the equatorial disk $C_0 = \{z = 0\} \setminus B^3$ (the plane with the \emph{open} unit disk removed, i.e.\ $\{|X| \geq 1,\ z = 0\}$ --- a non-compact surface with one planar end) is the unique properly embedded \emph{one-ended} exterior FBMS with a planar regular end \cite{MazetMendes2022} (the classification there concerns surfaces with one regular end); its spectrum is computed in Example~\ref{ex:flat_disk}.

\subsection{Exterior free-boundary minimal surfaces}

A free-boundary minimal surface in the unit ball $B^3 \subset \mathbb{R}^3$ is a minimal surface $\Sigma$ with $\partial\Sigma \subset S^2$ that meets the unit sphere orthogonally. In this paper, we consider the \emph{exterior} variant:

The \textbf{outward unit conormal} $\nu$ at $p \in \partial\Sigma$ is the unique unit vector in $T_p\Sigma$ perpendicular to $T_p(\partial\Sigma)$ and pointing away from $\Sigma$; it is \emph{not} the surface normal $\mathbf{n}$.

\begin{definition}\label{def:exterior_FBMS}
An \emph{exterior free-boundary minimal surface} is a smooth surface $\Sigma$ together with a proper minimal immersion $F\colon\Sigma\to\mathbb{R}^3\setminus B^3$ such that:
\begin{itemize}
    \item $F^{-1}(S^2)=\partial\Sigma$ (the immersion is neat along the sphere);
    \item $F$ meets $S^2$ orthogonally from the outside along $\partial\Sigma$;
    \item every connected component of $\Sigma$ has non-empty boundary.
\end{itemize}
We equip $\Sigma$ with $g=F^*g_{\mathrm{eucl}}$, suppress $F$ from the notation, and write $\mathbf{x}=F$. When $F$ is embedded, we identify $\Sigma$ with its image. In Section~\ref{sec:higher_codim} the same definition is used verbatim with $(\mathbb{R}^3, B^3, S^2)$ replaced by $(\mathbb{R}^n, B^n, S^{n-1})$, orthogonality in higher codimension meaning that the position vector is tangent to $\Sigma$ along the boundary, $\mathbf{x}(p) \in T_p\Sigma$ for $p \in \partial\Sigma$ (equivalently, $\mathbf{x}$ is orthogonal to the normal space of $\Sigma$), which for a hypersurface is exactly $\langle \mathbf{n}, \mathbf{x}\rangle = 0$; the conormal identity $\nu = -\mathbf{x}$ and hence $\partial_\nu x_k = -x_k$ along $\partial\Sigma$ are unchanged, since their derivation uses only that $|\mathbf{x}| = 1$ and $\mathbf{x} \in T_p\Sigma$ on the boundary, together with $\Sigma$ lying outside the ball --- the last fixing the sign in $\nu = \pm\mathbf{x}$.
\end{definition}

Orthogonality means $\langle \mathbf{n}, \mathbf{x} \rangle = 0$ along $\partial\Sigma$; since $\Sigma$ lies outside $B^3$, the outward conormal is $\nu = -\mathbf{x}$ (it points toward the ball center), in contrast to $\nu = +\mathbf{x}$ in the interior case --- the source of the coordinate eigenvalue $\sigma=-1$ in a coordinate-compatible realization (Section~\ref{subsec:coord_eigen}).

The third condition excludes components invisible to $L^2(\partial\Sigma)$, such as a plane disjoint from $B^3$; the boundary operator would otherwise ignore their independent interior harmonic degrees of freedom.

\section{Conformal Compactification and Admissible Harmonic Functions}\label{sec:weighted_sobolev}\label{sec:compactification}

\emph{Standing hypotheses.} From here until Section~\ref{sec:multiple_ends}, $\Sigma$ is a smooth Riemannian surface with smooth compact boundary, every connected component of $\Sigma$ meets $\partial\Sigma$, and outside a compact set $\Sigma_0\supset\partial\Sigma$ the surface is a disjoint union of $L\geq1$ ends $E_1,\dots,E_L$, each carrying a conformal diffeomorphism
\[
\zeta_\ell\colon E_\ell \xrightarrow{\ \sim\ } D^*_\varepsilon=\{0<|\zeta|<\varepsilon\},
\qquad (\zeta_\ell^{-1})^*g=\mu_\ell\,|d\zeta|^2,
\]
with $\mu_\ell\in C^\infty(D^*_\varepsilon)$ positive, such that the punctures lie at infinity: for every $0<\delta<\varepsilon$ the set $\Sigma_0\cup\bigcup_\ell\zeta_\ell^{-1}(\{\delta\leq|\zeta|<\varepsilon\})$ is compact, equivalently $\zeta_\ell(p)\to0$ as $p\in E_\ell$ leaves every compact subset of $\Sigma$.
No completeness, curvature bound or metric asymptotics are assumed. By Lemma~\ref{lem:conformal_end} every exterior FBMS with compact boundary and finitely many regular ends satisfies these hypotheses, so the analysis below applies to the surfaces of Section~\ref{sec:notation}; this is the only place where the geometry of Section~\ref{sec:notation} is used.

\subsection{Admissible growth at a puncture}\label{subsec:weight_choice}

Fix orders $N_\ell\geq1$ and put, on the end $E_\ell$,
\begin{equation}\label{eq:principal_parts}
s_{\ell,0}=-\log|\zeta_\ell|,\qquad
s_{\ell,2j-1}=\operatorname{Re}\zeta_\ell^{-j},\qquad
s_{\ell,2j}=\operatorname{Im}\zeta_\ell^{-j}
\qquad (1\leq j\leq N_\ell).
\end{equation}
Let $I=\{(\ell,m):1\leq\ell\leq L,\ 0\leq m\leq 2N_\ell\}$ and $N=|I|=\sum_\ell(2N_\ell+1)$. We write $C^\infty(\Sigma)$ for functions smooth up to $\partial\Sigma$, $\nu$ for the outward unit conormal and $\langle\cdot,\cdot\rangle$ for the $L^2(\partial\Sigma,ds)$ pairing.

\begin{definition}\label{def:weight_function}\label{def:admissible}
$\mathcal H$ is the space of $u\in C^\infty(\Sigma)$ with $\Delta_gu=0$ such that for each $\ell$ there is $c_\ell(u)\in\mathbb R^{2N_\ell+1}$ with $u-\sum_m c_{\ell,m}(u)s_{\ell,m}$ bounded on $\{0<|\zeta_\ell|<\varepsilon/2\}$. Put $c(u)=(c_\ell(u))_\ell\in\mathbb R^N$ and
\[
K=\{u\in\mathcal H:\ u|_{\partial\Sigma}=0\}.
\]
\end{definition}

If $\zeta'=a\zeta+O(\zeta^2)$ is another conformal puncture coordinate then $\zeta'^{-j}$ is a polynomial of degree $j$ in $\zeta^{-1}$ plus a bounded term, and $\log|\zeta'|-\log|\zeta|$ is bounded; for an antiholomorphic coordinate $\zeta'=a\bar\zeta+O(\bar\zeta^2)$ the same holds with $\bar\zeta^{-1}$, whose powers have the same real and imaginary parts up to sign. Hence $\mathcal H$ depends only on the conformal structure and on the orders $(N_\ell)$.

\begin{remark}\label{rem:planar_weights}\label{rem:orders}
For the regular ends of Section~\ref{subsec:planar_ends} the relevant order is $N_\ell=1$: by Lemma~\ref{lem:conformal_end} a rotated frame gives $x_1-ix_2=c\,\zeta^{-1}+O(1)$ and $x_3=a\log r+O(1)$ with $\log r=-\log|\zeta|+O(1)$, so every coordinate function of an exterior FBMS lies in $\mathcal H$ with $N_\ell=1$, and no higher principal part is needed. Where a surface of higher codimension needs a larger order, this is stated explicitly (Section~\ref{subsec:FS_example}). The choice of $(N_\ell)$ is the only input that fixes how much growth is retained at infinity. (In a weighted-Sobolev formulation of the same problem, with norms $\int|u|^2\rho^{2\delta}\,dV$ and $\rho$ the radial function of the end, $N_\ell=1$ corresponds to the weight window $\delta\in(-3,-2)$.)
\end{remark}

We use three standard facts.
\begin{enumerate}[label=(F\arabic*)]
\item\label{F1} In dimension two $\Delta_{\varphi h}=\varphi^{-1}\Delta_h$ for $\varphi>0$. In particular, on $E_\ell$ a $g$-harmonic function is Euclidean harmonic in $\zeta_\ell$, and the $s_{\ell,m}$ are harmonic.
\item\label{F2} A bounded harmonic function $v$ on $D^*_\varepsilon$ extends harmonically to $D_\varepsilon$. (Let $P$ be the Poisson integral of $v$ on $D_{\varepsilon/2}$; for $\eta>0$ the function $\pm(v-P)-\eta\log\frac{\varepsilon}{2|\zeta|}$ is harmonic on $D^*_{\varepsilon/2}$, vanishes on $|\zeta|=\varepsilon/2$ and tends to $-\infty$ at $0$, hence is $\leq0$; let $\eta\to0$.)
\item\label{F3} Let $(\Omega,h)$ be a compact Riemannian surface with smooth boundary, each component of which meets $\partial\Omega$. Its Dirichlet-to-Neumann map $\Lambda f=\partial_\nu u_f$ is symmetric and non-negative on $C^\infty(\partial\Omega)$, with $\langle\Lambda f,f\rangle=\int_{\Omega}|\nabla u_f|^2$; it is a classical elliptic pseudodifferential operator of order one with principal symbol $|\xi|$, and its closure is self-adjoint on $L^2(\partial\Omega)$ with domain $H^1(\partial\Omega)$ and compact resolvent \cite{LeeUhlmann1989,GirouardPolterovich2017}.
\end{enumerate}

\subsection{Filling the punctures}

\begin{lemma}[Compactification]\label{lem:compactification}\label{thm:trace}
Let $\overline\Sigma=\Sigma\sqcup\{p_1,\dots,p_L\}$ carry the smooth structure making each $\zeta_\ell$ (with $\zeta_\ell(p_\ell)=0$) a chart. Then $\overline\Sigma$ is a compact surface with boundary $\partial\Sigma$, each of whose components meets $\partial\Sigma$, and it carries a smooth metric $\hat g$ with $\hat g|_\Sigma=\varphi g$ for some positive $\varphi\in C^\infty(\Sigma)$ equal to $1$ near $\partial\Sigma$. A function on an open subset of $\Sigma$ is $g$-harmonic if and only if it is $\hat g$-harmonic.
\end{lemma}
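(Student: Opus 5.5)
The plan is to build $\overline\Sigma$ chart by chart, then construct $\hat g$ with a partition of unity, and finally read off the harmonicity statement from \ref{F1}.

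\textbf{Smooth structure.} Take the atlas of $\Sigma$ and add, for each $\ell$, the chart $\hat\zeta_\ell\colon E_\ell\cup\{p_\ell\}\to D_\varepsilon$, extended by $p_\ell\mapsto0$. On the overlap $E_\ell$ this chart is the diffeomorphism $\zeta_\ell$, so every transition map is smooth. The ends are disjoint, so distinct $p_\ell$ have disjoint chart domains, and the Hausdorff property needs checking only for a pair $p_\ell$, $q\in\Sigma$. Here I use the "punctures at infinity" hypothesis: $q$ lies in the compact set $\Sigma_0\cup\bigcup_{\ell'}\zeta_{\ell'}^{-1}(\{\delta\le|\zeta|<\varepsilon\})$ for every small $\delta$. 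In particular, if $q\in E_\ell$ then $|\zeta_\ell(q)|>0$, and $q\in\Sigma\setminus E_\ell$ is separated from $\hat\zeta_\ell^{-1}(D_\delta)$, since the latter lies in $E_\ell\cup\{p_\ell\}$.

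\textbf{Compactness, boundary and components.} Choose $\delta<\varepsilon$. Then $\overline\Sigma$ is the union of the compact set $\Sigma_0\cup\bigcup_\ell\zeta_\ell^{-1}(\{\delta\le|\zeta|<\varepsilon\})$ and the finitely many closed disks $\hat\zeta_\ell^{-1}(\overline D_{\delta'})$ with $\delta<\delta'<\varepsilon$, so it is compact. The added points are interior points, so $\partial\overline\Sigma=\partial\Sigma$. For the components, note that each $p_\ell$ lies in the closure of the connected set $E_\ell$. Hence every component of $\overline\Sigma$ contains a component of $\Sigma$, which meets $\partial\Sigma$ by hypothesis.

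\textbf{The metric.} This is the only step needing a construction. Near $p_\ell$, set $\hat g_\ell=\hat\zeta_\ell^*|d\zeta|^2$ on $\hat\zeta_\ell^{-1}(D_{\varepsilon})$. Take a smooth cutoff $\chi_\ell$ on $\overline\Sigma$ that equals $1$ on $\hat\zeta_\ell^{-1}(D_{\varepsilon/3})$ and is supported in $\hat\zeta_\ell^{-1}(D_{2\varepsilon/3})$. Define
\[
\hat g=\Bigl(1-\sum_\ell\chi_\ell\Bigr)g+\sum_\ell\chi_\ell\,\hat g_\ell .
\]
On $E_\ell$ we have $g=\mu_\ell\,\zeta_\ell^*|d\zeta|^2$, so on $\Sigma$
\[
\hat g=\varphi g,\qquad \varphi=1-\sum_\ell\chi_\ell+\sum_\ell\chi_\ell\,\mu_\ell^{-1}\circ\zeta_\ell .
\]
This $\varphi$ is a convex combination of positive functions, hence positive, and it is smooth. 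It equals $1$ outside $\bigcup_\ell\zeta_\ell^{-1}(D_{2\varepsilon/3})$, and that set contains a neighbourhood of $\partial\Sigma$, because the supports of the $\chi_\ell$ are compact subsets of the end charts away from $\Sigma_0$. Near $p_\ell$ the metric $\hat g$ equals the flat metric, so it is smooth and positive definite on all of $\overline\Sigma$.

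\textbf{Harmonicity.} Since $\hat g=\varphi g$ with $\varphi>0$ on $\Sigma$, fact \ref{F1} gives $\Delta_{\hat g}=\varphi^{-1}\Delta_g$ on any open subset of $\Sigma$. So the two harmonicity conditions coincide there.

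I expect no real obstacle. The only point needing care is the Hausdorff and compactness bookkeeping, which is exactly where the "punctures at infinity" hypothesis enters.
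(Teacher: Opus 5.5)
Your construction is the paper's. On $E_\ell$ your $\hat g$ equals $(\chi_\ell+(1-\chi_\ell)\mu_\ell)\,|d\zeta_\ell|^2$, which is the paper's metric up to the choice of radii. The covering of $\overline\Sigma$ by a compact core and finitely many closed disks, the boundary and component checks, and the appeal to \ref{F1} also all match. The one step that does not go through is the Hausdorff property. You rightly single it out, and the paper does not address it, but your argument does not prove it.

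For $q\in\Sigma\setminus E_\ell$ you need an open neighbourhood of $q$ missing $\zeta_\ell^{-1}(D^*_\delta)$ for some $\delta$. The fact that $\hat\zeta_\ell^{-1}(D_\delta)$ lies in $E_\ell\cup\{p_\ell\}$ does not give this when $q$ is in the closure of $E_\ell$. Nor can the hypothesis you cite supply it. Compactness of $\Sigma_0\cup\bigcup_\ell\zeta_\ell^{-1}(\{\delta\le|\zeta|<\varepsilon\})$ says that leaving compact sets forces $\zeta_\ell\to0$. It does not say that $\zeta_\ell\to0$ forces leaving compact sets. Here is a counterexample: take $\Sigma$ the closed flat unit disk, $\Sigma_0=\{\tfrac12\le|z|\le1\}\cup\{0\}$, $E_1=\{0<|z|<\tfrac12\}$ and $\zeta_1=z$. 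The standing hypotheses hold as literally stated, and every point lies in the compact sets you invoke. Yet $p_1$ cannot be separated from $0\in\Sigma$, so $\overline\Sigma$ is not a surface.

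What is needed is the converse half of ``the puncture lies at infinity'': every compact $C\subset\Sigma$ is disjoint from $\zeta_\ell^{-1}(D^*_\delta)$ once $\delta$ is small. With this, each $q\in\Sigma$ has a compact neighbourhood $C$, and $\operatorname{int}C$ and $\hat\zeta_\ell^{-1}(D_\delta)$ separate $q$ from $p_\ell$. The same property makes $\hat\zeta_\ell^{-1}(\operatorname{supp}\chi)$ closed in $\overline\Sigma$, which is what you need for your extended-by-zero cutoff $\chi_\ell$ to be smooth. The property holds in the paper's applications. For example, on a regular end of a proper immersion, Lemma~\ref{lem:conformal_end}(2) gives $|\mathbf x|\to\infty$ as $\zeta_\ell\to0$, while a convergent sequence has bounded $\mathbf x$. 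The paper's proof uses this property tacitly when it calls $\zeta_\ell=0$ ``the end's point at infinity''. So the repair is to state the hypothesis in this two-sided form; with that, your argument is complete.
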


\begin{proof}
Truncate the ends at half the radius and set
\[
C':=\Sigma\setminus\bigcup_\ell\zeta_\ell^{-1}\bigl(\{0<|\zeta|<\varepsilon/2\}\bigr),
\]
a compact subsurface by the standing hypothesis on the punctures: outside the charts $\Sigma$ is its compact core $\Sigma_0$, and inside each chart
$C'$ is the closed annulus $\varepsilon/2\leq|\zeta_\ell|<\varepsilon$ glued to it, the point
$\zeta_\ell=0$ being the end's point at infinity. Then $\overline\Sigma$ is the union of $C'$
with the closed filled disks $\zeta_\ell^{-1}(\{0<|\zeta|\leq\varepsilon/2\})\cup\{p_\ell\}$,
hence compact; a disk minus a point is connected, so the components of $\overline\Sigma$ are
those of $\Sigma$ with the punctures filled. Take $\chi\in C^\infty_c(D_{\varepsilon/2})$ with
$0\leq\chi\leq1$ and $\chi=1$ on $D_{\varepsilon/4}$, set
$\hat g=(\chi+(1-\chi)\mu_\ell)|d\zeta_\ell|^2$ on each chart and $\hat g=g$ elsewhere. Then $\varphi=(\chi+(1-\chi)\mu_\ell)/\mu_\ell$ on $E_\ell$ and $\varphi=1$ elsewhere, and $\operatorname{supp}\chi\circ\zeta_\ell$ is closed and disjoint from the compact set $\partial\Sigma$. The last claim is~\ref{F1}.
\end{proof}

\begin{lemma}[Unique continuation]\label{lem:UCP}
If $u\in C^\infty(\Sigma)$, $\Delta_gu=0$ and $u=\partial_\nu u=0$ on $\partial\Sigma$, then $u\equiv0$.
\end{lemma}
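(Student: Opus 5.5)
We reduce the statement to the classical unique continuation principle for the Cauchy problem of a second-order elliptic operator with smooth coefficients, applied locally at the boundary and then spread by connectedness.

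\emph{Local vanishing near the boundary.} Fix a point $q\in\partial\Sigma$ and work in a boundary chart. We extend $\Sigma$ slightly across $\partial\Sigma$ to a smooth open surface $\widetilde\Sigma$ and extend $g$ smoothly to it. On the collar outside $\Sigma$ we set $\tilde u=0$. Because $u$ and $\partial_\nu u$ vanish on $\partial\Sigma$, all first derivatives of $u$ vanish there, so $\tilde u\in C^1$ near $q$. It is also a weak solution of $\Delta_g\tilde u=0$: integrating by parts against a test function $\psi$ supported near $q$ gives $\int_\Sigma\langle\nabla u,\nabla\psi\rangle=\int_{\partial\Sigma}\psi\,\partial_\nu u=0$. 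By elliptic regularity (Weyl's lemma for an operator with smooth coefficients), $\tilde u$ is smooth and $g$-harmonic on a neighbourhood of $q$. It vanishes on an open set, namely the exterior collar.

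\emph{Propagation.} By the strong unique continuation property for $\Delta_g$, which holds by Aronszajn's theorem for smooth coefficients, $\tilde u\equiv0$ on that neighbourhood. The same conclusion follows from real-analyticity after passing to isothermal coordinates, where $\tilde u$ is Euclidean harmonic by \ref{F1}. Hence $u$ vanishes on a nonempty open subset of $\Sigma$ adjacent to each boundary component. Let $Z\subset\Sigma$ be the interior of the zero set of $u$.
\begin{itemize}
\item $Z$ is open by definition.
\item $Z$ is closed in $\Sigma$: near an interior point, $u$ is harmonic in an isothermal chart and hence real-analytic, so an accumulation point of $Z$ has a neighbourhood where $u\equiv0$. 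At boundary points we use the local argument above.
\end{itemize}
The standing hypothesis says every component of $\Sigma$ meets $\partial\Sigma$, so $Z$ meets every component. By connectedness, $u\equiv0$.

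\emph{Main obstacle.} The only delicate step is the boundary one: checking that the extension by zero is a genuine weak solution, which needs both Cauchy data to vanish, and then invoking interior regularity. A simpler alternative is to use the $C^\infty$ Cauchy data directly. In isothermal boundary coordinates $\Sigma$ is locally a half-disk, and Schwarz reflection applied to the holomorphic function $\partial_z u$ shows it vanishes identically, since its real and imaginary parts vanish on the straight edge. This bypasses Aronszajn, and it is the route I would try first. Note that no condition at the punctures is needed, because the argument is purely local plus connectedness.
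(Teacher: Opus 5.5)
Your proposal is correct and follows essentially the paper's route: you extend $u$ by zero across an attached exterior collar, use $u=\partial_\nu u=0$ on $\partial\Sigma$ to see that the extension is weakly (hence genuinely) harmonic, and invoke Aronszajn's unique continuation together with the hypothesis that every component of $\Sigma$ meets $\partial\Sigma$. The only difference is presentational: the paper applies unique continuation once, globally on the components of the collared surface $\Sigma'$, whereas you apply it locally near the boundary and then spread the vanishing by your open--closed argument.
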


\begin{proof}
Attach an exterior collar to obtain $\Sigma'\supset\Sigma$ with a smooth extension of $g$, and let $\tilde u=u$ on $\Sigma$, $\tilde u=0$ on $\Sigma'\setminus\Sigma$. Then $\tilde u$ is locally Lipschitz, since $u$ is smooth up to $\partial\Sigma$ and vanishes there, and, for $\phi\in C^\infty_c(\Sigma'^\circ)$, $\int_{\Sigma'}\langle\nabla\tilde u,\nabla\phi\rangle=-\int_\Sigma\phi\,\Delta u+\int_{\partial\Sigma}\phi\,\partial_\nu u=0$. So $\tilde u$ is harmonic, vanishes on an open set, and by unique continuation \cite{Aronszajn1957} vanishes on every component of $\Sigma'$ meeting $\Sigma'\setminus\Sigma$, that is, on all of $\Sigma$.
\end{proof}

\subsection{Structure of the admissible space}

\begin{lemma}[Structure of $\mathcal H$]\label{lem:harmonic}\label{lem:structure}
\begin{enumerate}[label=(\alph*)]
\item The coefficients $c(u)$ are unique, $\mathcal H$ is a vector space, $c$ is linear, and the regular part $u-\sum_m c_{\ell,m}(u)s_{\ell,m}$ extends harmonically across $p_\ell$.
\item For $f\in C^\infty(\partial\Sigma)$ let $u^0_f$ be the $\hat g$-harmonic function on $\overline\Sigma$ with trace $f$. Then $u^0_f|_\Sigma\in\mathcal H$, $c(u^0_f)=0$, and $u^0_f$ is the unique bounded $g$-harmonic function in $C^\infty(\Sigma)$ with trace $f$.
\item For each $(\ell,m)\in I$ there is a unique $w_{\ell,m}\in K$ with $c(w_{\ell,m})=e_{\ell,m}$.
\item Every $u\in\mathcal H$ satisfies $u=u^0_{u|_{\partial\Sigma}}+\sum_I c_{\ell,m}(u)\,w_{\ell,m}$. In particular $K=\operatorname{span}\{w_{\ell,m}\}$ and $\dim K=N$.
\item The functions $\psi_{\ell,m}:=\partial_\nu w_{\ell,m}|_{\partial\Sigma}\in C^\infty(\partial\Sigma)$ are linearly independent.
\end{enumerate}
\end{lemma}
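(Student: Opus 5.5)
I would prove the five parts in order, working throughout on the compactification $(\overline\Sigma,\hat g)$ of Lemma~\ref{lem:compactification} and using \ref{F1}--\ref{F3} together with Lemma~\ref{lem:UCP}.

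For (a), I would begin with uniqueness of the coefficients. Suppose $\sum_m c_m s_{\ell,m}$ is bounded near $\zeta=0$. I would integrate it against $1$, $\cos j\theta$ and $\sin j\theta$ over the circles $|\zeta|=\rho$. The mean is $-c_0\log\rho$. The $j$-th Fourier coefficients are multiples of $c_{2j-1}\rho^{-j}$ and $c_{2j}\rho^{-j}$, since $\zeta^{-j}=\rho^{-j}e^{-ij\theta}$. Boundedness as $\rho\to0$ then forces every $c_m=0$. Linearity of $\mathcal H$ and of $c$ is immediate. The regular part is harmonic by \ref{F1} and bounded on the punctured disk, so \ref{F2} extends it across $p_\ell$.

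For (b), $u^0_f$ is smooth on $\overline\Sigma$, so it is bounded near each $p_\ell$. Hence it lies in $\mathcal H$ with $c=0$, and it is $g$-harmonic on $\Sigma$ by Lemma~\ref{lem:compactification}. For uniqueness, let $v$ be bounded, $g$-harmonic and have trace $f$. By \ref{F1} and \ref{F2}, $v-u^0_f$ extends to a $\hat g$-harmonic function on $\overline\Sigma$ with zero boundary values. By the maximum principle on each component, each of which meets the boundary, it vanishes.

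For (c), I would construct the corrector directly. Let $\chi_\ell$ be a cutoff equal to $1$ near $p_\ell$ and supported in the chart, and set $h=\chi_\ell s_{\ell,m}$ on $\Sigma$. Then $\Delta_{\hat g}h$ is smooth and compactly supported in the annulus where $\chi_\ell$ varies, and it vanishes near $p_\ell$ and near $\partial\Sigma$. Solve the Dirichlet problem $\Delta_{\hat g}v=-\Delta_{\hat g}h$ on $\overline\Sigma$ with $v|_{\partial\Sigma}=0$; this is solvable because no component of $\overline\Sigma$ is closed. Then $w_{\ell,m}=h+v$ is harmonic, vanishes on $\partial\Sigma$, and differs from $s_{\ell,m}$ by a bounded function near $p_\ell$ and is bounded near the other punctures. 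Thus $c(w_{\ell,m})=e_{\ell,m}$. Uniqueness follows because the difference of two such functions lies in $K$ with $c=0$, hence is bounded with zero trace, and so vanishes by (b).

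For (d), given $u\in\mathcal H$, set $u'=u-\sum c_{\ell,m}(u)w_{\ell,m}$. Then $u'$ has $c(u')=0$, so it is bounded, and its trace equals that of $u$. By (b), $u'=u^0_{u|_{\partial\Sigma}}$. Applying this to $u\in K$ gives $u^0_0=0$, so $K$ is spanned by the $w_{\ell,m}$. Their independence follows from $c(w_{\ell,m})=e_{\ell,m}$, which gives $\dim K=N$.

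For (e), suppose $\sum a_{\ell,m}\psi_{\ell,m}=0$. Then $w=\sum a_{\ell,m}w_{\ell,m}$ is harmonic and smooth up to the boundary, with $w=\partial_\nu w=0$ on $\partial\Sigma$. Lemma~\ref{lem:UCP} gives $w\equiv0$, so $a=c(w)=0$. Smoothness of $\psi_{\ell,m}$ holds because $w_{\ell,m}$ is smooth up to $\partial\Sigma$, being $\hat g$-harmonic near the boundary with smooth data.

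I do not expect a real obstacle. The main points needing care are, in (c), checking that the cutoff error is supported away from both $p_\ell$ and $\partial\Sigma$, and, in (a), the circle-average argument.
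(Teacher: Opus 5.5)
Your proposal is correct and follows the paper's proof essentially step for step: circle Fourier coefficients for uniqueness of $c(u)$ and \ref{F2} for the regular part in (a), extension across the punctures plus the maximum principle in (b), a cutoff of $s_{\ell,m}$ corrected by a Dirichlet solution in (c), subtraction of the correctors and (b) in (d), and Lemma~\ref{lem:UCP} in (e). The points you flag (the cutoff error vanishing near both $p_\ell$ and $\partial\Sigma$, and boundedness of $\Sigma$ away from the punctures) are exactly the ones the paper's proof relies on implicitly.
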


\begin{proof}
(a) If $b=\sum_m c_ms_{\ell,m}$ is bounded near $0$, its Fourier coefficients on $|\zeta|=\rho$ in the modes $\cos j\theta,\sin j\theta$ ($j\geq1$) are $\pm c_{2j-1}\rho^{-j}$, $\pm c_{2j}\rho^{-j}$, and in the mode $1$ it is $-c_0\log\rho$; boundedness forces $c=0$. Linearity follows. The regular part is bounded and harmonic by~\ref{F1}, hence extends by~\ref{F2}.

(b) $u^0_f\in C^\infty(\overline\Sigma)$ by elliptic regularity, is $g$-harmonic on $\Sigma$ by Lemma~\ref{lem:compactification}, and is bounded near each $p_\ell$. If $h$ is bounded, $g$-harmonic and has zero trace, then by~\ref{F2} it extends to a $\hat g$-harmonic function on $\overline\Sigma$ vanishing on $\partial\Sigma$, so $h=0$ by the maximum principle, every component of $\overline\Sigma$ meeting $\partial\Sigma$.

(c) Let $\chi_\ell$ be a cutoff equal to $1$ near $p_\ell$ and supported in $\zeta_\ell^{-1}(D_{\varepsilon/2})\cup\{p_\ell\}$. Then $F=\Delta_{\hat g}(\chi_\ell s_{\ell,m})$ vanishes where $\chi_\ell\in\{0,1\}$, hence extends by zero to $C^\infty_c(\overline\Sigma^\circ)$. Let $v$ solve $\Delta_{\hat g}v=-F$ with $v|_{\partial\Sigma}=0$ and put $w_{\ell,m}=\chi_\ell s_{\ell,m}+v$. Uniqueness follows from (b).

(d) $h=u-u^0_{u|_{\partial\Sigma}}-\sum c_{\ell,m}(u)w_{\ell,m}\in\mathcal H$ has $c(h)=0$, hence is bounded, and has zero trace; so $h=0$ by (b).

(e) If $\sum a_{\ell,m}\psi_{\ell,m}=0$ then $k=\sum a_{\ell,m}w_{\ell,m}$ has vanishing Cauchy data, so $k=0$ by Lemma~\ref{lem:UCP} and $a=c(k)=0$.
\end{proof}

\begin{remark}\label{rem:decaying_not_kernel}
Decay and zero boundary data force a kernel element to vanish: by (a) a bounded element of $K$ extends across the punctures and vanishes by the maximum principle. Thus only the principal parts~\eqref{eq:principal_parts} can occur as leading kernel terms. On $C_0$ the basis with $N_1=1$ is $\log r$ and $(r-r^{-1})\{\cos,\sin\}\theta$ (Appendix~\ref{app:kernel_example}).
\end{remark}

\begin{remark}\label{rem:harmonic_planar}
Nothing above distinguishes catenoidal from planar ends: both are conformal punctured disks, and only the conformal structure enters. The distinction reappears in Section~\ref{subsec:coord_eigen}, where the growth rate $a$ decides whether $x_3$ contributes a principal part.
\end{remark}

\paragraph{Notation.} Let $\Psi\colon\mathbb R^N\to C^\infty(\partial\Sigma)$, $\Psi a=\sum a_{\ell,m}\psi_{\ell,m}$, with transpose $\Psi^\top f=(\langle f,\psi_{\ell,m}\rangle)_I$. The Gram matrix $\Gamma=\Psi^\top\Psi$ is invertible by Lemma~\ref{lem:structure}(e); $\Psi$ is injective and $\Psi^\top$ is onto $\mathbb R^N$. Define
\begin{equation}\label{eq:S0_def}
S_0f=\partial_\nu u^0_f|_{\partial\Sigma},\qquad f\in C^\infty(\partial\Sigma).
\end{equation}
Since $\hat g=g$ near $\partial\Sigma$, the conormals and line elements agree, so $S_0$ is exactly the Dirichlet-to-Neumann map of the compact surface $(\overline\Sigma,\hat g)$ and~\ref{F3} applies to it. By Lemma~\ref{lem:structure}(b), $S_0$ is the bounded realization. It is also the finite-energy one. The Dirichlet energy is conformally invariant, and distinct Fourier modes are orthogonal for it on an annulus $\{\rho_0<|\zeta|<\varepsilon\}$. There the energy of $(\alpha\rho^{-j}+\beta\rho^{j})\cos j\phi$, $j\geq1$, is $\pi j\bigl(\alpha^2(\rho_0^{-2j}-\varepsilon^{-2j})+\beta^2(\varepsilon^{2j}-\rho_0^{2j})\bigr)$, the cross terms cancelling (likewise for $\sin j\phi$), and that of $\alpha_0\log\rho$ is $2\pi\alpha_0^2\log(\varepsilon/\rho_0)$. So an element $u$ of $\mathcal H$ has finite energy exactly when $c(u)=0$.

\section{The Kre\u\i n Formula and the Lagrangian Normalization}\label{sec:steklov_weighted}

\subsection{Classification of the symmetric realizations}\label{subsec:asymptotic_norm}

Lemma~\ref{lem:structure}(d) says that a boundary function has an $N$-parameter family of admissible harmonic extensions, so the Steklov operator is multi-valued until a selection rule is fixed. The next theorem classifies all linear selections whose operator is symmetric.

\begin{theorem}[Classification]\label{thm:krein}\label{prop:augmented_wellposed}
Let $f\mapsto u_f$ be a linear map $C^\infty(\partial\Sigma)\to\mathcal H$ with $u_f|_{\partial\Sigma}=f$, and let $Sf=\partial_\nu u_f|_{\partial\Sigma}$. Then $u_f=u^0_f+\sum_I(Gf)_{\ell,m}w_{\ell,m}$ for a unique linear $G\colon C^\infty(\partial\Sigma)\to\mathbb R^N$, and $S=S_0+\Psi G$. Moreover $S$ is symmetric on $C^\infty(\partial\Sigma)$ if and only if $G=M\Psi^\top$ for a symmetric $N\times N$ matrix $M$, which is then unique:
\begin{equation}\label{eq:krein}
S=S_M:=S_0+\Psi M\Psi^\top .
\end{equation}
Conversely every symmetric $M$ arises this way.
\end{theorem}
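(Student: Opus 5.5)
The plan has three parts: decompose the selection, reduce symmetry to a statement about $G$, and then characterize $G$.

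\textbf{Decomposition.} Given the linear selection $f\mapsto u_f$, apply Lemma~\ref{lem:structure}(d) to $u_f\in\mathcal H$. Since $u_f|_{\partial\Sigma}=f$, this gives $u_f=u^0_f+\sum_I c_{\ell,m}(u_f)w_{\ell,m}$, so we set $Gf:=c(u_f)$. This $G$ is linear because $c$ is linear by Lemma~\ref{lem:structure}(a) and $f\mapsto u_f$ is linear. It is unique because the $w_{\ell,m}$ are linearly independent: they span the $N$-dimensional space $K$, or alternatively use $c(w_{\ell,m})=e_{\ell,m}$. Taking conormal derivatives on $\partial\Sigma$ and using the definitions \eqref{eq:S0_def} and $\psi_{\ell,m}=\partial_\nu w_{\ell,m}$ gives $S=S_0+\Psi G$.

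\textbf{Reduction of symmetry.} By~\ref{F3}, $S_0$ is symmetric. Hence $S$ is symmetric if and only if $B(f,h):=\langle\Psi Gf,h\rangle=\langle Gf,\Psi^\top h\rangle_{\mathbb R^N}$ is a symmetric bilinear form on $C^\infty(\partial\Sigma)$.

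\textbf{The ``if'' direction and the converse.} If $G=M\Psi^\top$ with $M$ symmetric, then $B(f,h)=\langle M\Psi^\top f,\Psi^\top h\rangle$, which is symmetric. Conversely, for a given symmetric $M$ we define $u_f:=u^0_f+\sum_I(M\Psi^\top f)_{\ell,m}w_{\ell,m}$. This is linear, lies in $\mathcal H$ by Lemma~\ref{lem:structure}(b),(c), and has trace $f$ because each $w_{\ell,m}\in K$. So every symmetric $M$ arises.

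\textbf{The ``only if'' direction (main step).} Assume $B$ is symmetric. First I show that $G$ factors through $\Psi^\top$. Suppose $\Psi^\top f=0$. Then $B(h,f)=\langle Gh,\Psi^\top f\rangle=0$ for all $h$, so by symmetry $\langle Gf,\Psi^\top h\rangle=0$ for all $h$. Since $\Psi^\top$ is onto $\mathbb R^N$ (Lemma~\ref{lem:structure}(e) and the invertibility of $\Gamma$), this forces $Gf=0$. Thus $\ker\Psi^\top\subset\ker G$, and because $\Psi^\top$ is surjective there is a unique linear $M\colon\mathbb R^N\to\mathbb R^N$ with $G=M\Psi^\top$. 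Concretely, $M=G\Psi\Gamma^{-1}$, which one checks by writing $f=\Psi\Gamma^{-1}\Psi^\top f+f'$ with $\Psi^\top f'=0$. Symmetry of $B$ now reads $\langle Ma,b\rangle=\langle a,Mb\rangle$ for all $a=\Psi^\top f$ and $b=\Psi^\top h$, which range over all of $\mathbb R^N$; hence $M$ is symmetric. Finally, $M$ is unique: if $M\Psi^\top=M'\Psi^\top$, then $M=M'$ since $\Psi^\top$ is onto.

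I expect the only delicate point to be this factorization step. It is purely algebraic, but it has to run on the infinite-dimensional space $C^\infty(\partial\Sigma)$ without any continuity assumption on $G$, and it relies on the surjectivity of $\Psi^\top$.
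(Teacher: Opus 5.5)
Your proof is correct and follows the paper's argument: both use symmetry to show that $G$ (equivalently $B=\Psi G$) kills $\ker\Psi^\top$. Your $M=G\Psi\Gamma^{-1}$ coincides with the paper's $\Gamma^{-1}\Psi^\top B\Psi\Gamma^{-1}$, since $\Gamma^{-1}\Psi^\top\Psi=\mathrm{Id}$. The differences are minor: you phrase the factorization through surjectivity of $\Psi^\top$ rather than through the projection $P=\Psi\Gamma^{-1}\Psi^\top$ and injectivity of $\Psi$, and you spell out the converse (every symmetric $M$ arises), which the paper leaves implicit.
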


\begin{proof}
The first assertion is Lemma~\ref{lem:structure}(d) with $Gf=c(u_f)$. If $G=M\Psi^\top$ with $M$ symmetric then $\Psi M\Psi^\top$ is symmetric, and $S_0$ is symmetric by~\ref{F3}. Conversely let $S$ be symmetric; then $B:=\Psi G=S-S_0$ is symmetric on $C^\infty(\partial\Sigma)$ with range in $Y:=\operatorname{ran}\Psi$. For $f\in C^\infty\cap Y^\perp$ and all $h\in C^\infty$, $\langle Bf,h\rangle=\langle f,Bh\rangle=0$, so $Bf=0$. With $P=\Psi\Gamma^{-1}\Psi^\top$ the orthogonal projection onto $Y$ this gives $B=BP$ and $PB=B$. The matrix $\Psi^\top B\Psi$ has entries $\langle B\psi_j,\psi_i\rangle$ and is symmetric; set $M=\Gamma^{-1}\Psi^\top B\Psi\,\Gamma^{-1}$. Then $\Psi M\Psi^\top=PBP=B$, and $G=M\Psi^\top$ by injectivity of $\Psi$. Uniqueness follows from injectivity of $\Psi$ and surjectivity of $\Psi^\top$.
\end{proof}

\subsection{The symplectic form on the asymptotic data}

The classification above is stated in terms of a matrix. We now record the equivalent description in terms of the asymptotic data, which is the formulation used in Sections~\ref{sec:multiple_ends}--\ref{sec:higher_codim} and is intrinsic.

\begin{definition}\label{def:symplectic}
For $u,v\in\mathcal H$ set
\begin{equation}\label{eq:symplectic_form}
\omega(u,v)=\lim_{\rho\to0}\ \sum_\ell\int_{\{|\zeta_\ell|=\rho\}}\bigl(v\,\partial_n u-u\,\partial_n v\bigr)\,ds,
\end{equation}
where $n$ points towards the puncture. The integrand is a conformally invariant $1$-form, so the expression does not depend on the conformal coordinate, and each term is independent of $\rho$.
\end{definition}

\begin{definition}\label{def:full_asymptotic_data}\label{def:extraction}
For $u\in\mathcal H$ let $v_\ell$ be its regular part at $p_\ell$ (Lemma~\ref{lem:structure}(a)) and define $d_\ell(u)\in\mathbb R^{2N_\ell+1}$ by
\[
d_{\ell,0}(u)=v_\ell(p_\ell),\qquad
d_{\ell,2j-1}(u)+i\,d_{\ell,2j}(u)=\tfrac{2}{(j-1)!}\,\partial_\zeta^{\,j}v_\ell(p_\ell)\quad(1\leq j\leq N_\ell).
\]
The \emph{asymptotic data} of $u$ is $q(u)=(c(u),d(u))$.
\end{definition}

\begin{definition}\label{def:asymptotic_data_space}
The \emph{asymptotic data space} is $\mathcal A=\mathbb R^N\times\mathbb R^N$ with the bilinear form
\begin{equation}\label{eq:omega_explicit}
\omega\bigl((c,d),(c',d')\bigr)=2\pi\bigl(\langle c,d'\rangle-\langle d,c'\rangle\bigr).
\end{equation}
\end{definition}

\begin{proposition}\label{prop:A_symplectic}\label{prop:omega_explicit}\label{prop:boundary_formulation}
\begin{enumerate}[label=(\roman*)]
\item For $u,w\in\mathcal H$,
\begin{equation}\label{eq:boundary_pairing}
\int_{\partial\Sigma}\bigl(w\,\partial_\nu u-u\,\partial_\nu w\bigr)\,ds=-\,\omega\bigl(q(u),q(w)\bigr).
\end{equation}
The form~\eqref{eq:omega_explicit} is non-degenerate, hence symplectic, and by~\eqref{eq:boundary_pairing} it agrees with Definition~\ref{def:symplectic}.
\item $d(u^0_f)=-(2\pi)^{-1}\Psi^\top f$; $q(w_{\ell,m})=(e_{\ell,m},Qe_{\ell,m})$ with $Q$ a symmetric $N\times N$ matrix; $q(K)=\{(c,Qc)\}$ is Lagrangian; and $q(\mathcal H)=\mathcal A$.
\end{enumerate}
\end{proposition}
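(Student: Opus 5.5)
The plan is to prove (i) by Green's identity on a truncated surface, compute the contribution of each puncture by Fourier modes, and then read off (ii) by applying (i) to particular pairs.

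\emph{Green's identity.} For $0<\rho<\varepsilon/2$, let $\Sigma_\rho$ be $\Sigma$ with the disks $\{|\zeta_\ell|<\rho\}$ removed. This is a compact surface whose boundary is $\partial\Sigma$ together with the circles $\{|\zeta_\ell|=\rho\}$. On these circles the outward conormal of $\Sigma_\rho$ is exactly the $n$ of Definition~\ref{def:symplectic}, pointing toward the puncture. Since $u$ and $w$ are harmonic, Green's second identity gives
\[
\int_{\partial\Sigma}\bigl(w\,\partial_\nu u-u\,\partial_\nu w\bigr)\,ds=-\sum_\ell\int_{|\zeta_\ell|=\rho}\bigl(w\,\partial_n u-u\,\partial_n w\bigr)\,ds .
\]
This shows that each circle term is independent of $\rho$. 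By conformal invariance~\ref{F1} we may compute the circle terms in the flat metric $|d\zeta|^2$. It therefore suffices to show that the limit of the circle terms equals $2\pi(\langle c,d'\rangle-\langle d,c'\rangle)$ for $q(u)=(c,d)$ and $q(w)=(c',d')$.

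\emph{Local Fourier computation.} On each end write $u=\sum_m c_m s_m+v$, where $v$ is harmonic across $p_\ell$ by Lemma~\ref{lem:structure}(a). Expand $v=v(p)+\sum_{k\geq1}\operatorname{Re}(a_k\zeta^k)$, and similarly for $w$. With $\partial_n=-\partial_\rho$ and $ds=\rho\,d\theta$, the integral of $w\,\partial_nu-u\,\partial_nw$ splits by Fourier mode, because distinct modes are $L^2(d\theta)$-orthogonal. There are four kinds of pairing.
\begin{itemize}
\item Singular against singular within one mode: the terms are $\rho^{-j}$ against $\rho^{-j}$, or $\log\rho$ against $\log\rho$. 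These have zero Wronskian and cancel.
\item Regular against regular: smooth, and this contribution is $O(\rho)$, so it vanishes as $\rho\to0$.
\item Singular against regular in the same mode: $\rho^{-j}$ against $\rho^{j}$, or $\log\rho$ against a constant. The Wronskian times $\rho$ is constant. This produces $2\pi$ multiplied by the product of the coefficient $c_{2j-1}$ or $c_{2j}$ with the $\rho^j$-coefficient of $v$, and $2\pi c_0 v(p)$ for the logarithmic mode.
\item Regular modes $\rho^k$ with $k>N_\ell$: these have no singular partner, since the singular parts stop at order $N_\ell$.
\end{itemize}
The normalisation $\tfrac{2}{(j-1)!}\partial_\zeta^jv(p)$ in Definition~\ref{def:full_asymptotic_data} is meant to turn the $\rho^j$-coefficient in the third pairing into exactly $d_{2j-1}+i\,d_{2j}$. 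Checking the constants and signs here is the step I expect to be the main obstacle. I would do it on a single mode, $s=\operatorname{Re}\zeta^{-j}$ against $\operatorname{Re}\zeta^{j}$, and then on the logarithmic mode, adjusting the definition's convention if needed.

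Non-degeneracy of~\eqref{eq:omega_explicit} is immediate, since its matrix is $2\pi\begin{pmatrix}0&I\\-I&0\end{pmatrix}$. Identity~\eqref{eq:boundary_pairing}, combined with the first display, shows that the analytic form of Definition~\ref{def:symplectic} equals $\omega(q(u),q(w))$.

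\emph{Part (ii).} Define $Q$ by $Qe_{\ell,m}:=d(w_{\ell,m})$, so that $q(w_{\ell,m})=(e_{\ell,m},Qe_{\ell,m})$ by construction.
\begin{itemize}
\item Apply~\eqref{eq:boundary_pairing} to $u=w_{\ell,m}$ and $w=w_{\ell',m'}$. Both have zero trace, so the left side vanishes. Hence $\langle e,Qe'\rangle=\langle Qe,e'\rangle$, so $Q$ is symmetric.
\item Consequently $q(K)=\{(c,Qc)\}$ is an $N$-dimensional isotropic subspace of the $2N$-dimensional space $\mathcal A$, and hence Lagrangian.
\item Apply~\eqref{eq:boundary_pairing} to $u=u^0_f$ and $w=w_{\ell,m}$. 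Here $c(u^0_f)=0$ by Lemma~\ref{lem:structure}(b). The left side is $-\langle f,\psi_{\ell,m}\rangle$ and the right side is $2\pi\,d_{\ell,m}(u^0_f)$. This gives $d(u^0_f)=-(2\pi)^{-1}\Psi^\top f$.
\item For surjectivity: $\Psi^\top$ is onto $\mathbb R^N$, so $q(\{u^0_f\})=\{0\}\times\mathbb R^N$. Adding $q(K)$, whose first components exhaust $\mathbb R^N$, shows that $q(\mathcal H)=\mathcal A$. The map $q$ is linear, by the linearity of $c$ and of taking regular parts.
\end{itemize}
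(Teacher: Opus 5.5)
Your route is the paper's: Green's identity on $\Sigma_\rho$, a mode-by-mode evaluation of the circle terms in the flat coordinate, and then (ii) by pairing $u^0_f$ and the $w_{\ell,m}$ against the $w_{\ell',m'}$. Your part (ii) is correct and matches the paper's argument step for step.

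The gap is in (i), at the one step that carries its content. You do not carry out the constant check. You leave open ``adjusting the definition's convention'', which is not an option when the statement concerns the fixed map $q$ of Definition~\ref{def:full_asymptotic_data}. And your third bullet, as written, is false for $j\geq2$.

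Let $a=\alpha\rho^{-j}+\beta\rho^{j}$ and $b=\alpha'\rho^{-j}+\beta'\rho^{j}$ be the $\cos j\phi$-coefficients of $u$ and $w$ on $|\zeta|=\rho$. With $\partial_n=-\partial_\rho$, $ds=\rho\,d\phi$ and $\int_0^{2\pi}\cos^2 j\phi\,d\phi=\pi$,
\[
\pi\rho\,\bigl(b\,\partial_n a-a\,\partial_n b\bigr)=2\pi j\,(\alpha\beta'-\beta\alpha').
\]
So the singular--regular Wronskian carries a factor $j$ that your bullet omits. Nor does the normalisation make the $\rho^j$-coefficient equal to $d_{2j-1}+i\,d_{2j}$. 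With $v=\operatorname{Re}\sum_k a_k\zeta^k$ one has $\partial_\zeta^j v(p)=\tfrac{j!}{2}a_j$, hence $d_{2j-1}+i\,d_{2j}=j\,a_j$, i.e.\ $\beta=d_{2j-1}/j$ and $\beta'=d'_{2j-1}/j$.

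The two factors of $j$ cancel, and the mode contributes exactly $2\pi(c_{2j-1}d'_{2j-1}-d_{2j-1}c'_{2j-1})$. In the $\sin j\phi$ mode, both the singular coefficient (from $\operatorname{Im}\zeta^{-j}=-\rho^{-j}\sin j\phi$) and the regular one ($-\operatorname{Im}a_j\,\rho^{j}$) carry a minus sign, which cancels in the bilinear pairing. In the logarithmic mode, $\partial_n(-\log\rho)=\rho^{-1}$ gives $2\pi(c_0d'_0-d_0c'_0)$. So the definition is correct as stated and no adjustment is needed; but this computation is what your proof of (i) has to contain.

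Two smaller points. First, your displayed Green identity only shows that the \emph{sum} over $\ell$ of the circle terms is independent of $\rho$. The per-end independence asserted in Definition~\ref{def:symplectic} needs Green's identity on an annulus inside a single end. Alternatively, it follows from the computation above: there the $\rho^{\pm 2j-1}$ terms cancel exactly, and the regular--regular pairing vanishes exactly by Green on the disk $D_\rho$, both regular parts being harmonic there. So no limit is needed at all. Second, the passage to the flat metric uses the conformal invariance of the flux form $\partial_n u\,ds$: the factor $\varphi^{-1/2}$ in $\partial_n$ cancels the factor $\varphi^{1/2}$ in $ds$. That is a different fact from \ref{F1}.
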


\begin{proof}
(i) Green's second identity on $\Sigma_\rho=\Sigma\setminus\bigcup_\ell\zeta_\ell^{-1}(D_\rho)$ gives $\int_{\partial\Sigma}(w\partial_\nu u-u\partial_\nu w)\,ds+\sum_\ell J_\ell=0$ with $J_\ell=\int_{|\zeta_\ell|=\rho}(w\partial_nu-u\partial_nw)\,ds$. Each $J_\ell$ is independent of $\rho$ and the integrand is conformally invariant, so we compute in the flat coordinate $\zeta=\rho e^{i\phi}$ with $n=-\partial_\rho$ and $ds=\rho\,d\phi$. Write $q(u)=(c,d)$, $q(w)=(C,D)$, and the regular part of $u$ as $v=\operatorname{Re}\sum_{j\geq0}h_j\zeta^j$; then $h_0=d_0$ and $jh_j=d_{2j-1}+id_{2j}$ for $1\leq j\leq N_\ell$. On each circle the modes $\cos j\phi,\sin j\phi$ are orthogonal for different $j$. The mode $j=0$ contributes $2\pi(c_0D_0-d_0C_0)$. For $1\leq j\leq N_\ell$ the $\cos j\phi$-coefficient of $u$ on $|\zeta|=\rho$ is $a=c_{2j-1}\rho^{-j}+(d_{2j-1}/j)\rho^{j}$, and if $A$ is that of $w$ then $\pi\rho\,(A\,\partial_n a-a\,\partial_n A)=2\pi(c_{2j-1}D_{2j-1}-d_{2j-1}C_{2j-1})$, the powers $\rho^{\pm2j-1}$ cancelling; the $\sin j\phi$-coefficient is $-(c_{2j}\rho^{-j}+(d_{2j}/j)\rho^{j})$, and the sign cancels in the product. These are the terms of~\eqref{eq:omega_explicit}. The modes $j>N_\ell$ are purely regular and have vanishing Wronskian, and all remaining pairings vanish by orthogonality.

(ii) Define $Q$ by $Qe_{\ell,m}:=d(w_{\ell,m})$, so that $q(w_{\ell,m})=(e_{\ell,m},Qe_{\ell,m})$ by Lemma~\ref{lem:structure}(c). The first identity is~\eqref{eq:boundary_pairing} with $u=u^0_f$, for which $c=0$, and $w=w_{\ell,m}$, which vanishes on $\partial\Sigma$: it reads $-\langle f,\psi_{\ell,m}\rangle=2\pi\,d_{\ell,m}(u^0_f)$. Symmetry of $Q$ follows from~\eqref{eq:boundary_pairing} and $w_{\ell,m}|_{\partial\Sigma}=0$. Then $q(K)=\{(c,Qc)\}$ is isotropic of dimension $N$, hence Lagrangian, and by Lemma~\ref{lem:structure}(d)
\begin{equation}\label{eq:green_step5}
q\Bigl(u^0_f+\sum c_{\ell,m}w_{\ell,m}\Bigr)=\bigl(c,\ -(2\pi)^{-1}\Psi^\top f+Qc\bigr),
\end{equation}
and $\Psi^\top$ is onto, so $q(\mathcal H)=\mathcal A$.
\end{proof}

\begin{remark}\label{rem:extension_theory}
By part (ii) the data map is onto: every prescribed pair of growing and regular data is realized by an admissible harmonic function.
\end{remark}

\subsection{Lagrangian normalizations}\label{sec:steklov_def}

\begin{definition}\label{def:lagrangian_subspace}\label{def:lagrangian_norm}
A subspace $\mathcal L\subset\mathcal A$ is a \emph{normalization} if $\mathcal A=q(K)\oplus\mathcal L$. Given such an $\mathcal L$, the \emph{normalized extension} of $f$ is the $u_f\in\mathcal H$ with trace $f$ and $q(u_f)\in\mathcal L$, which exists and is unique by Proposition~\ref{prop:boundary_normalization}, and the associated Steklov operator is $\mathcal S_{\mathcal L}f=\partial_\nu u_f|_{\partial\Sigma}$.
\end{definition}

\begin{proposition}[Lagrangian dictionary]\label{prop:boundary_normalization}\label{prop:coordinate_lagrangian}
Let $\mathcal L\subset\mathcal A$ be a subspace. Every $f\in C^\infty(\partial\Sigma)$ has exactly one admissible extension with data in $\mathcal L$ if and only if $\mathcal A=q(K)\oplus\mathcal L$. In that case $\mathcal S_{\mathcal L}$ is symmetric if and only if $\mathcal L$ is Lagrangian, and then $\mathcal S_{\mathcal L}=S_M$ with
\begin{equation}\label{eq:lagrangian_boundary}
\mathcal L=\mathcal L_M:=\{(c,d):\ c=-2\pi M(d-Qc)\}.
\end{equation}
The map $M\mapsto\mathcal L_M$ is a bijection from symmetric $N\times N$ matrices onto the Lagrangian complements of $q(K)$.
\end{proposition}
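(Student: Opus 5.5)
The plan is to reduce everything to the explicit data formula \eqref{eq:green_step5}: every $u\in\mathcal H$ with trace $f$ is $u^0_f+\sum c_{\ell,m}w_{\ell,m}$, and its data is $q(u)=(c,\,-(2\pi)^{-1}\Psi^\top f+Qc)$.

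\emph{Existence and uniqueness.} Fix $f$. The admissible extensions of $f$ have data forming the affine space $q(u^0_f)+q(K)$. This uses Lemma~\ref{lem:structure}(d), and the fact that $q$ is injective on $K$ because $c(w_{\ell,m})=e_{\ell,m}$. As $f$ ranges, $q(u^0_f)=(0,-(2\pi)^{-1}\Psi^\top f)$ ranges over all of $\{0\}\times\mathbb R^N$, since $\Psi^\top$ is onto. This subspace is a complement of $q(K)=\{(c,Qc)\}$. Hence unique solvability for every $f$ says exactly that each coset $x+q(K)$, with $x\in\{0\}\times\mathbb R^N$, meets $\mathcal L$ in exactly one point. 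Because every coset of $q(K)$ has such a representative, this is equivalent to $\mathcal A=q(K)\oplus\mathcal L$. Here "meets" gives $\mathcal A=q(K)+\mathcal L$, and "exactly one point" gives $q(K)\cap\mathcal L=0$.

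\emph{Symmetry.} By \eqref{eq:boundary_pairing}, for normalized extensions $u_f,u_h$ we have
\[
\langle \mathcal S_{\mathcal L}f,h\rangle-\langle f,\mathcal S_{\mathcal L}h\rangle=-\omega\bigl(q(u_f),q(u_h)\bigr).
\]
The map $f\mapsto q(u_f)$ is linear into $\mathcal L$. It is onto $\mathcal L$: given $x\in\mathcal L$, Proposition~\ref{prop:A_symplectic}(ii) provides $u\in\mathcal H$ with $q(u)=x$, and then $u=u_{f}$ for its trace $f$ by uniqueness. So $\mathcal S_{\mathcal L}$ is symmetric exactly when $\omega$ vanishes on $\mathcal L$. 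Since $\dim\mathcal L=N$ is half of $\dim\mathcal A$, this is the same as $\mathcal L$ being Lagrangian.

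\emph{Identification with $S_M$.} Since a complement of $q(K)$ has dimension $N$ and is transverse to $q(K)$, it is the graph of a linear map from the projection it defines. I will parametrize using the coordinates $(c,e)$ with $e:=d-Qc$; this change of coordinates is symplectic because $Q$ is symmetric. In these coordinates $q(K)=\{e=0\}$. A complement of $\{e=0\}$ is then a graph $\{(c,e):c=Ae\}$ for a unique $A$, and it is Lagrangian iff $A$ is symmetric, which one checks from $\omega=2\pi(\langle c,e'\rangle-\langle e,c'\rangle)$. Writing $A=-2\pi M$ yields \eqref{eq:lagrangian_boundary} and the claimed bijection.

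For such $\mathcal L$, the normalized extension has $e=-(2\pi)^{-1}\Psi^\top f$, so $c=M\Psi^\top f$. Theorem~\ref{thm:krein} then gives $\mathcal S_{\mathcal L}=S_0+\Psi M\Psi^\top=S_M$.

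The only delicate point is bookkeeping: checking that the shear $d\mapsto d-Qc$ preserves $\omega$, and keeping track of the sign and the factor $2\pi$. I would verify it directly:
\[
\langle c,d'-Qc'\rangle-\langle d-Qc,c'\rangle=\langle c,d'\rangle-\langle d,c'\rangle,
\]
which holds because $\langle c,Qc'\rangle=\langle Qc,c'\rangle$.
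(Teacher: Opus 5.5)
Your proof is correct. The existence and uniqueness part and the identification with $S_M$ follow the paper's own argument: the shear $(c,d)\mapsto(c,d-Qc)$, the data formula \eqref{eq:green_step5}, and the description of complements of $q(K)$ as graphs. The step ``$\mathcal S_{\mathcal L}$ symmetric $\iff$ $\mathcal L$ Lagrangian'' is where you take a different route.

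The paper first computes $\mathcal S_{\mathcal L}=S_0+\Psi M\Psi^\top$ for an arbitrary complement, with $M=-Z/(2\pi)$ not yet known to be symmetric. It then uses Theorem~\ref{thm:krein} to see that this operator is symmetric exactly when $M$ is. Finally, the explicit formula for $\omega$ on $\operatorname{graph}Z$ shows that $M$ is symmetric exactly when $\mathcal L$ is Lagrangian.

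You instead apply the Green identity \eqref{eq:boundary_pairing} to two normalized extensions. This makes the symmetry defect $\langle\mathcal S_{\mathcal L}f,h\rangle-\langle f,\mathcal S_{\mathcal L}h\rangle$ literally $-\omega$ evaluated on pairs in $\mathcal L$. You close the argument with the surjectivity of $f\mapsto q(u_f)$ onto $\mathcal L$, which you correctly derive from $q(\mathcal H)=\mathcal A$ and uniqueness, and with the count $\dim\mathcal L=N$.

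Your route explains directly why ``Lagrangian'' is the right condition. It does not use the matrix parametrization at that step, so it would carry over to settings without an explicit Kre\u\i n formula. The paper's route needs no surjectivity argument and handles non-symmetric $M$ uniformly, which is what Remark~\ref{rem:why_lagrangian} relies on. In your write-up the graph computation is then needed only for the bijection $M\mapsto\mathcal L_M$, where both proofs use it anyway.
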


\begin{proof}
The map $\tau(c,d)=(c,d-Qc)$ is symplectic because $Q$ is symmetric, and sends $q(K)$ to $\mathbb R^N\times0$. By~\eqref{eq:green_step5} the $\tau$-data of the extensions of $f$ fill the affine space $\mathbb R^N\times\{d_f\}$ with $d_f=-(2\pi)^{-1}\Psi^\top f$, and $d_f$ ranges over $\mathbb R^N$. So $\tau\mathcal L$ meets every such affine space exactly once if and only if it is the graph $\{(Zd,d)\}$ of a linear map $Z$, that is, if and only if $\mathcal A=q(K)\oplus\mathcal L$. Then $c(u_f)=Zd_f=M\Psi^\top f$ with $M=-Z/(2\pi)$, so $\mathcal S_{\mathcal L}=S_0+\Psi M\Psi^\top$, which by Theorem~\ref{thm:krein} is symmetric exactly when $M$ is. Since $\omega((Zd,d),(Zd',d'))=2\pi(\langle Zd,d'\rangle-\langle d,Zd'\rangle)$, that happens exactly when $\operatorname{graph}Z$, equivalently $\mathcal L$, is Lagrangian. Finally $\mathcal L=\tau^{-1}\operatorname{graph}(-2\pi M)=\mathcal L_M$.
\end{proof}

\begin{remark}\label{rem:why_lagrangian}
Non-Lagrangian complements give $S_0+\Psi M\Psi^\top$ with $M$ non-symmetric, which is not symmetric; Lagrangian subspaces meeting $q(K)$ non-trivially do not define an extension for every $f$. So ``Lagrangian complement'' is exactly the condition for a symmetric realization defined on all of $C^\infty(\partial\Sigma)$.
\end{remark}

\begin{remark}[Comparison with pseudo-Laplacians]\label{rem:pseudo_laplacians}
The construction is the Steklov counterpart of a classical picture for the Laplacian itself. On a compact surface $X$ with a marked point $P$, the Laplacian with domain $C^\infty_c(X\setminus P)$ has deficiency indices $(1,1)$. Its self-adjoint extensions, the pseudo-Laplacians of Colin de Verdi\`ere~\cite{ColinDeVerdiere1982}, are fixed by a linear relation between the coefficient of $\log r$ and the constant term at $P$, that is, by a Lagrangian line in a two-dimensional symplectic space of asymptotic data; the Friedrichs extension, in which the logarithmic coefficient vanishes, is the Laplacian of $X$; and the resolvents of two extensions differ by a rank-one operator, the Kre\u\i n formula~\cite[Section~2]{AHK2012}. On surfaces with conical singularities the space of asymptotic data grows with the cone angles, the extensions again correspond to Lagrangian subspaces, and the Friedrichs extension to the vanishing of the singular coefficients~\cite{HillairetKokotov2013}.

In the present setting $(\mathcal A,\omega)$ plays the part of the space of asymptotic data with its boundary form; the bounded realization $S_0$, whose extensions have no principal parts, plays the part of the Friedrichs extension; and Theorem~\ref{thm:krein} is the Kre\u\i n formula, the perturbation having rank at most $N$. At each puncture the pair $(c_{\ell,0},d_{\ell,0})$, the coefficient of $-\log|\zeta_\ell|$ and the value of the regular part, is exactly the asymptotic datum of a pseudo-Laplacian, with the form $2\pi(c\,d'-d\,c')$. Two differences are specific to the Steklov problem. First, the operator acts on $L^2(\partial\Sigma)$, and the modes admitted at the punctures need not be square-integrable, so no deficiency count fixes $N$: the orders $N_\ell$ record how much growth is admitted, and $N_\ell=1$ is what the coordinate functions require (Remark~\ref{rem:orders}). Second, the cone angle plays no role: harmonicity and the Dirichlet energy are conformally invariant, so the multiplicity of an end, which is its cone angle at infinity divided by $2\pi$, changes neither $\mathcal A$ nor $\omega$ (Section~\ref{subsec:codim_intrinsic}).
\end{remark}

\section{Discreteness}\label{sec:proof_discrete}

\begin{theorem}[Spectral properties]\label{thm:spectral}\label{prop:principal_symbol}\label{prop:symmetry}\label{cor:S_elliptic}
Let $M$ be symmetric. The closure of $S_M$ is self-adjoint on $L^2(\partial\Sigma)$ with domain $H^1(\partial\Sigma)$. It is a classical elliptic pseudodifferential operator of order one with principal symbol $|\xi|$ and has compact resolvent. Its eigenvalues $\sigma_1\leq\sigma_2\leq\cdots$ tend to $+\infty$ and $\sigma_1\geq-\|M\|\,\|\Gamma\|$. With $\mathcal N_M(\tau)=\#\{j:\sigma_j(S_M)<\tau\}$ and $n_\pm(M)$ the numbers of positive and negative eigenvalues of $M$,
\begin{equation}\label{eq:counting}
\mathcal N_0(\tau)-n_+(M)\ \leq\ \mathcal N_M(\tau)\ \leq\ \mathcal N_0(\tau)+n_-(M)\qquad(\tau\in\mathbb R).
\end{equation}
In particular $S_M$ has at most $n_-(M)$ negative eigenvalues, counted with multiplicity.
\end{theorem}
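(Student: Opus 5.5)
The plan is to treat $S_M$ as a bounded, finite-rank, smoothing perturbation of $S_0$. By~\ref{F3}, $S_0$ is a classical elliptic pseudodifferential operator of order one with principal symbol $|\xi|$. Its closure $\overline{S_0}$ is self-adjoint on $L^2(\partial\Sigma)$ with domain $H^1(\partial\Sigma)$, has compact resolvent, and is non-negative. The perturbation is $B:=\Psi M\Psi^\top$. Since each $\psi_{\ell,m}\in C^\infty(\partial\Sigma)$, the operator $B$ extends to $L^2$ as a bounded symmetric operator of rank at most $N$. Its Schwartz kernel $\sum_{i,j}M_{ij}\psi_i(x)\psi_j(y)$ is smooth, so $B$ is a smoothing operator. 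Three consequences follow.
\begin{enumerate}
\item Kato--Rellich (bounded perturbation) shows that $\overline{S_M}=\overline{S_0}+B$ is self-adjoint on the same domain $H^1(\partial\Sigma)$. Here we use that $C^\infty$ is a core for $\overline{S_0}$ and that $B$ is bounded.
\item Adding a smoothing operator does not change the pseudodifferential class or the principal symbol $|\xi|$.
\item The resolvent identity $R_M(z)=R_0(z)-R_0(z)BR_M(z)$, or simply the fact that the domain $H^1$ embeds compactly in $L^2$ with the graph norm of $\overline{S_M}$ equivalent to the $H^1$ norm, gives compact resolvent.
\end{enumerate}

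Discreteness and $\sigma_j\to+\infty$ then follow from compact resolvent together with semiboundedness. For the lower bound, note that $\overline{S_0}\geq0$ by~\ref{F3}, so
\[
\langle S_Mf,f\rangle\geq\langle Bf,f\rangle=\langle M\Psi^\top f,\Psi^\top f\rangle_{\mathbb R^N}\geq-\|M\|\,|\Psi^\top f|^2 .
\]
We also have $|\Psi^\top f|^2=\langle \Psi\Psi^\top f,f\rangle\leq\|\Psi\Psi^\top\|\,\|f\|^2$, and $\|\Psi\Psi^\top\|=\|\Psi^\top\Psi\|=\|\Gamma\|$. This gives $\sigma_1\geq-\|M\|\,\|\Gamma\|$ by the min-max characterization. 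Unboundedness above follows because the eigenvalues of a self-adjoint operator with compact resolvent that is bounded below accumulate only at $+\infty$. Alternatively, it follows from the counting bound below combined with the same property for $S_0$.

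For~\eqref{eq:counting} I would use Glazman's lemma: $\mathcal N_M(\tau)$ is the maximal dimension of a subspace of $H^1$ on which $\langle S_Mf,f\rangle<\tau\|f\|^2$. Decompose $M=M_+-M_-$ with $M_\pm\geq0$ of ranks $n_\pm(M)$. Then $B=\Psi M_+\Psi^\top-\Psi M_-\Psi^\top$, where the first term is non-negative of rank $\leq n_+$ and the second is non-negative of rank $\leq n_-$.

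For the upper bound, take a subspace $V$ with $\dim V=\mathcal N_M(\tau)$ on which $S_M<\tau$. Intersect it with $\ker(M_-\Psi^\top)$, which has codimension at most $n_-$. On this intersection $\langle S_0f,f\rangle\leq\langle S_Mf,f\rangle<\tau\|f\|^2$, so $\mathcal N_0(\tau)\geq\mathcal N_M(\tau)-n_-$.

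For the lower bound, argue symmetrically. Start from a subspace for $S_0$ of dimension $\mathcal N_0(\tau)$ and intersect with $\ker(M_+\Psi^\top)$, which loses at most $n_+$ dimensions. On the intersection $S_M\leq S_0<\tau$. Taking $\tau=0$ and using $\mathcal N_0(0)=0$ (since $S_0\geq0$) gives at most $n_-(M)$ negative eigenvalues.

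The only delicate point is the domain and self-adjointness statement: that the closure of $S_0$ on $C^\infty$ has domain exactly $H^1$. This I take directly from~\ref{F3}, and it is stable under the bounded perturbation $B$. Everything else is routine.
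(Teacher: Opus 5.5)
Your proposal is correct and follows the paper's proof essentially step for step. The steps are the same: $S_M=S_0+B$ with $B=\Psi M\Psi^\top$ a bounded symmetric smoothing perturbation, the resolvent identity for compactness, $S_0\geq0$ for the lower bound on $\sigma_1$, and Glazman's lemma on the subspace $\ker(M_-\Psi^\top)=\{f:\Psi^\top f\in E_{\geq0}(M)\}$, which is exactly the paper's $W$. Your direct argument for the lower counting bound via $\ker(M_+\Psi^\top)$ is the paper's ``replace $M$ by $-M$'' step written out.
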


\begin{proof}
$B=\Psi M\Psi^\top$ is bounded and symmetric on $L^2$, with smooth kernel $\sum M_{ij}\psi_i(x)\psi_j(y)$ and $\|B\|\leq\|M\|\|\Psi\|^2=\|M\|\|\Gamma\|$. Hence $S_M=S_0+B$ on $H^1$. It is self-adjoint because $B$ is a bounded symmetric perturbation, and it is a pseudodifferential operator with the same principal symbol by~\ref{F3}, $B$ being smoothing. The resolvent identity $(S_M-\lambda)^{-1}=(S_0-\lambda)^{-1}\bigl(\mathrm{Id}-B(S_M-\lambda)^{-1}\bigr)$ shows the resolvent is compact. Since $S_0\geq0$ we get $\langle S_Mf,f\rangle\geq-\|B\|\|f\|^2$, so the spectrum is discrete, bounded below and infinite, whence $\sigma_j\to+\infty$.

For~\eqref{eq:counting} use Glazman's lemma: for a self-adjoint $A$ with compact resolvent, $\mathcal N_A(\tau)$ is the largest dimension of a subspace $V\subset\operatorname{Dom}A$ on which $\langle Af,f\rangle<\tau\|f\|^2$ for $f\neq0$. Let $V\subset H^1$ realize $\mathcal N_M(\tau)$ and put $W=\{f:\Psi^\top f\in E_{\geq0}(M)\}$, where $E_{\geq0}(M)$ is the non-negative spectral subspace of $M$. Then $W$ has codimension at most $n_-(M)$ and on $W$ we have $\langle S_0f,f\rangle\leq\langle S_Mf,f\rangle$. Hence $\langle S_0f,f\rangle<\tau\|f\|^2$ on $V\cap W\setminus0$ and $\mathcal N_0(\tau)\geq \mathcal N_M(\tau)-n_-(M)$. Exchanging the roles of $S_0$ and $S_M$, that is, replacing $M$ by $-M$, gives the other inequality. Since $\mathcal N_0(0)=0$, the last claim follows.
\end{proof}

\begin{corollary}\label{cor:bounded_below}\label{prop:finite_rank}
Every Lagrangian normalization yields a self-adjoint Steklov operator with compact resolvent, discrete real spectrum bounded below and accumulating only at $+\infty$; the number of negative eigenvalues is at most $\operatorname{rank}M$, and the eigenvalue asymptotics of the compact surface $(\overline\Sigma,\hat g)$ \cite{GPPS2014} transfer to it up to an index shift of at most $\operatorname{rank}M$.
\end{corollary}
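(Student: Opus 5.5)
The corollary follows from combining Proposition~\ref{prop:boundary_normalization} with Theorem~\ref{thm:spectral}, plus one refinement of the negative-eigenvalue count and one remark on asymptotics.

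\textbf{Reduction to $S_M$.} Given a Lagrangian normalization $\mathcal L$, Proposition~\ref{prop:boundary_normalization} gives a unique symmetric $M$ with $\mathcal L=\mathcal L_M$ and $\mathcal S_{\mathcal L}=S_M=S_0+\Psi M\Psi^\top$ on $C^\infty(\partial\Sigma)$. Theorem~\ref{thm:spectral} then says that the closure of $S_M$ is self-adjoint on $H^1(\partial\Sigma)$ and has compact resolvent. Its spectrum is therefore a sequence of real eigenvalues of finite multiplicity. This sequence is bounded below by $-\|M\|\,\|\Gamma\|$ and accumulates only at $+\infty$.

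\textbf{Negative eigenvalues.} Theorem~\ref{thm:spectral} bounds the number of negative eigenvalues by $n_-(M)$. Since $n_-(M)\le\operatorname{rank}M=n_+(M)+n_-(M)$, the claimed bound follows.

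\textbf{Transfer of asymptotics.} By \eqref{eq:counting}, $|\mathcal N_M(\tau)-\mathcal N_0(\tau)|\le\max(n_+(M),n_-(M))\le\operatorname{rank}M$ for every $\tau$. Write $\lambda_j$ for the eigenvalues of $S_0$ in increasing order. Translating the counting-function inequality to eigenvalues gives
\[
\lambda_{j-n_-(M)}\le\sigma_j(S_M)\le\lambda_{j+n_+(M)},
\]
with $\lambda_k$ read as $-\infty$ for $k\le0$. This is the stated index shift of at most $\operatorname{rank}M$.

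$S_0$ is exactly the Dirichlet-to-Neumann map of the compact smooth surface $(\overline\Sigma,\hat g)$. Moreover $\hat g=g$ near $\partial\Sigma$, so the boundary lengths agree. Hence the asymptotics of \cite{GPPS2014} for the $\lambda_j$ apply unchanged, including the $O(j^{-\infty})$ refinement in terms of the boundary component lengths. A shift by a bounded number of indices only changes the leading Weyl term $\pi j/\operatorname{Length}(\partial\Sigma)$ by $O(1)$.

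\textbf{Main obstacle.} The only delicate point is how precisely the GPPS asymptotics survive the shift. The rapid-decay remainder in \cite{GPPS2014} is stated for the sorted union of arithmetic-type sequences, one per boundary component. An index shift moves $\sigma_j$ between neighbouring terms of that model sequence, so it preserves the model only up to such reindexing. I would therefore state the transfer at the level of the interlacing inequalities above, rather than claim the full $O(j^{-\infty})$ remainder for $S_M$. The remaining steps are direct citations.
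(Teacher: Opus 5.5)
Your proposal is correct and is essentially the paper's own proof, which combines Proposition~\ref{prop:boundary_normalization} with Theorem~\ref{thm:spectral} and reads the index shift off the counting bound~\eqref{eq:counting}; your use of $n_\pm(M)\le\operatorname{rank}M$ and the interlacing $\lambda_{j-n_-(M)}\le\sigma_j(S_M)\le\lambda_{j+n_+(M)}$ spell out exactly that step. Stating the transfer only through interlacing matches what the corollary claims, though it is more cautious than necessary: because the $\psi_{\ell,m}$ are smooth, $\|\Psi M\Psi^\top(S_0-\tau)^{-1}\|\to0$ as $\tau\to\infty$ with $\operatorname{dist}(\tau,\operatorname{spec}S_0)\geq\tau^{-K}$, so no eigenvalue of $S_0+t\Psi M\Psi^\top$, $t\in[0,1]$, crosses such $\tau$ and the high-energy index shift is zero, which preserves even the $O(j^{-\infty})$ remainder of \cite{GPPS2014}.
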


\begin{proof}
Combine Proposition~\ref{prop:boundary_normalization} with Theorem~\ref{thm:spectral}; the counting bound~\eqref{eq:counting} gives the index shift.
\end{proof}

\subsection{Coordinate Functions as Eigenfunctions}\label{subsec:coord_eigen}

We now specialize to an exterior FBMS and recover the coordinate eigenvalue $-1$.

\begin{lemma}\label{lem:coord_harmonic}
Let $\Sigma$ be an exterior FBMS and $\ell_a=\langle a,\mathbf x\rangle$, $f_a=\ell_a|_{\partial\Sigma}$. Then $\Delta_g\ell_a=0$ and $\partial_\nu\ell_a=-f_a$ on $\partial\Sigma$.
\end{lemma}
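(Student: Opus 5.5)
The plan is to prove the two assertions separately, since each is a one-line consequence of facts already recorded in the paper.

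\emph{Harmonicity.} Minimality of the immersion $F=\mathbf x$ is equivalent to $\Delta_g\mathbf x=0$, as noted after~\eqref{eq:laplace_beltrami}. The Laplace--Beltrami operator is linear and $a$ is a constant vector, so $\Delta_g\ell_a=\langle a,\Delta_g\mathbf x\rangle=0$. Nothing about the ends enters this step.

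\emph{Conormal derivative.} For a tangent vector $v\in T_p\Sigma$, the differential of $\ell_a$ along the immersion is $d\ell_a(v)=\langle a,dF(v)\rangle$. Taking $v=\nu$ gives $\partial_\nu\ell_a=\langle a,\nu\rangle$, where $\nu$ is viewed in $\mathbb R^3$. It then suffices to justify the identity $\nu=-\mathbf x$ along $\partial\Sigma$, which is recorded after Definition~\ref{def:exterior_FBMS}. I would rederive it as follows.

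At $p\in\partial\Sigma$ we have $|\mathbf x|=1$, and $\partial\Sigma\subset S^2$, so $\mathbf x\perp T_p(\partial\Sigma)$. Orthogonality to the sphere means $\mathbf x\in T_p\Sigma$ (for a hypersurface, $\langle\mathbf n,\mathbf x\rangle=0$). Hence $\mathbf x$ is a unit vector in $T_p\Sigma$ orthogonal to $T_p(\partial\Sigma)$, and therefore $\nu=\pm\mathbf x$.

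To fix the sign, I would use that $\Sigma$ lies in $\{|\mathbf x|\geq1\}$. Along any curve $\gamma$ in $\Sigma$ starting at $p$ with $\gamma'(0)=-\nu$, which points into $\Sigma$, the function $|\gamma|^2$ is $\geq1$ and equals $1$ at $t=0$. Therefore
\[
\tfrac{d}{dt}\big|_{t=0}|\gamma|^2=-2\langle\mathbf x,\nu\rangle\geq0 .
\]
This rules out $\nu=+\mathbf x$, so $\nu=-\mathbf x$. Consequently $\partial_\nu\ell_a=\langle a,-\mathbf x\rangle=-f_a$.

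\emph{Expected difficulty.} The only delicate point is this sign determination. The derivative inequality alone only gives $\langle\mathbf x,\nu\rangle\leq0$, but combined with $\nu=\pm\mathbf x$ it forces $\langle\mathbf x,\nu\rangle=-1$, so no second-order argument is needed. The same argument applies verbatim in higher codimension, where the tangency condition $\mathbf x\in T_p\Sigma$ is the hypothesis itself.
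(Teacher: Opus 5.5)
Your proposal is correct and follows essentially the paper's proof. Harmonicity comes from $\Delta_g\mathbf x=0$. For the conormal derivative you show $\nu=\pm\mathbf x$ from $|\mathbf x|\equiv1$ on $\partial\Sigma$ and tangency, then fix the sign with $|\mathbf x|^2\geq1$. The only cosmetic differences are that you differentiate along the inward direction $-\nu$ where the paper writes $0\geq\partial_\nu|\mathbf x|^2$, and that you state explicitly the tangency $\mathbf x\in T_p\Sigma$, which the paper uses implicitly.
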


\begin{proof}
Minimality gives $\Delta_g\mathbf x=0$. Along $\partial\Sigma$, $|\mathbf x|\equiv1$ gives $\mathbf x\perp d\mathbf x(T\partial\Sigma)$; since $\mathbf x$ is a unit tangent vector, $\mathbf x=\pm d\mathbf x(\nu)$. Because $|\mathbf x|^2\geq1$ on $\Sigma$ with equality on $\partial\Sigma$, $0\geq\partial_\nu|\mathbf x|^2=2\langle\mathbf x,d\mathbf x(\nu)\rangle$, so $d\mathbf x(\nu)=-\mathbf x$ and $\partial_\nu\ell_a=-\ell_a$.
\end{proof}

Let $X=\{a\in\mathbb R^n:\ell_a\in\mathcal H\}$ be the admissible directions, $X_0=\{a\in X:\ell_a\equiv0\}$, $F=\{f_a:a\in X\}$ and $k=\dim F$. Writing $c_a=c(\ell_a)$, Lemmas~\ref{lem:structure}(d) and~\ref{lem:coord_harmonic} give
\begin{equation}\label{eq:coord_relation}
-f_a=\partial_\nu\ell_a=S_0f_a+\Psi c_a\qquad(a\in X).
\end{equation}

\begin{lemma}[Symmetric extension]\label{lem:symmetric_extension}
Let $Y\subset\mathbb R^N$ be a subspace and $T\colon Y\to\mathbb R^N$ linear with $\langle Ty,y'\rangle=\langle y,Ty'\rangle$ on $Y$. The symmetric matrices $M$ with $M|_Y=T$ form a non-empty affine space of dimension $\tfrac12(N-\dim Y)(N-\dim Y+1)$.
\end{lemma}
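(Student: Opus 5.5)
Linear algebra in an adapted orthonormal basis. Let $d=\dim Y$, choose an orthonormal basis of $\mathbb R^N$ whose first $d$ vectors span $Y$, and write $Y^\perp$ for the complement. In this basis a matrix $M$ has block form
\[
M=\begin{pmatrix} A & B\\ C & D\end{pmatrix},
\]
with $A\colon Y\to Y$, $C\colon Y\to Y^\perp$, $B\colon Y^\perp\to Y$, $D\colon Y^\perp\to Y^\perp$. The condition $M|_Y=T$ fixes the first block column: $A=P_YT$ and $C=P_{Y^\perp}T$, where $P_Y,P_{Y^\perp}$ are the orthogonal projections. So constraints fall only on the first block column, and symmetry of $M$ means $A=A^\top$, $B=C^\top$, $D=D^\top$.

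\textbf{Existence.} The hypothesis $\langle Ty,y'\rangle=\langle y,Ty'\rangle$ for $y,y'\in Y$ says exactly that $\langle P_YTy,y'\rangle=\langle y,P_YTy'\rangle$, i.e.\ $A$ is symmetric. Set $B:=C^\top$ and take any symmetric $D$, e.g.\ $D=0$. The resulting $M$ is symmetric and restricts to $T$ on $Y$, so the set is non-empty.

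\textbf{Dimension.} The set is an affine space, being the intersection of the linear space of symmetric matrices with the affine space $\{M:M|_Y=T\}$, non-empty by the above. Its direction space consists of the symmetric $M$ with $M|_Y=0$. For these, $A=0$, $C=0$, hence $B=C^\top=0$, and $D$ is an arbitrary symmetric endomorphism of $Y^\perp$. The dimension is therefore that of symmetric $(N-d)\times(N-d)$ matrices, namely $\tfrac12(N-d)(N-d+1)$.

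The only step needing any care is the existence step, specifically identifying that the hypothesis on $T$ is exactly the symmetry of the $Y$-$Y$ block $P_YT$. Everything else is block bookkeeping, and no earlier results of the paper are needed.
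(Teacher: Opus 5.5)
Your proof is correct and is the paper's argument written in block form. The paper's explicit choice $M=\tilde T+\tilde T^\top-P\tilde TP$ with $\tilde T=TP$ is exactly your block matrix with $A=P_YT$, $B=C^\top$, $D=0$, written without choosing a basis, and the hypothesis enters in both versions only through the symmetry of the $Y$--$Y$ block $PTP$. The dimension count is likewise the same observation, that two solutions differ by an arbitrary symmetric form on $Y^\perp$.
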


\begin{proof}
Let $P$ be the orthogonal projection onto $Y$, $\tilde T=TP$ and $M=\tilde T+\tilde T^\top-P\tilde TP$. The last term is symmetric by hypothesis, so $M$ is symmetric; for $y\in Y$ and $z\in\mathbb R^N$, $\langle\tilde T^\top y,z\rangle=\langle y,TPz\rangle=\langle PTy,z\rangle$, so $My=Ty$. Two solutions differ by a symmetric matrix vanishing on $Y$, that is, by a symmetric form on $Y^\perp$.
\end{proof}

\begin{proposition}[Coordinate realizations]\label{prop:coordinate_eigenfunctions}\label{thm:transversality}\label{cor:coord_lagrangian}
\begin{enumerate}[label=(\roman*)]
\item For $a\in X$: $f_a=0\iff\ell_a\equiv0\iff\Psi^\top f_a=0$. Hence $k=\dim X-\dim X_0$ and $\Psi^\top|_F$ is injective.
\item For symmetric $M$: $S_Mf_a=-f_a$ for all $a\in X$ if and only if $M\Psi^\top f_a=c_a$ for all $a\in X$. Such $M$ exist and form an affine space of dimension $\tfrac12(N-k)(N-k+1)$.
\item For every such $M$, $\dim\ker(S_M+1)\geq k$, and $S_M$ has at most $n_-(M)$ negative eigenvalues. If $k=N$ then $M$ is unique and the negative spectrum of $S_M$ is exactly $\{-1\}$, with multiplicity $N$.
\item Equivalently, $V=\{q(\ell_a):a\in X\}$ is isotropic of dimension $k$ with $V\cap q(K)=0$, and the Lagrangian complements of $q(K)$ containing $V$ are exactly the $\mathcal L_M$ of (ii). If $k=N$ then $V$ itself is the unique one and $\mathcal A=q(K)\oplus V$.
\end{enumerate}
\end{proposition}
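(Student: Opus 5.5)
We take the four parts in order, each reduced to Lemma~\ref{lem:structure}, Lemma~\ref{lem:coord_harmonic}, relation~\eqref{eq:coord_relation} and Lemma~\ref{lem:symmetric_extension}.

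\emph{Part (i).} If $f_a=0$, Lemma~\ref{lem:coord_harmonic} gives $\partial_\nu\ell_a=-f_a=0$, so $\ell_a$ has zero Cauchy data and Lemma~\ref{lem:UCP} gives $\ell_a\equiv0$. Conversely, assume $\Psi^\top f_a=0$. Pair~\eqref{eq:coord_relation} with $f_a$:
\[
-\|f_a\|^2=\langle S_0f_a,f_a\rangle+\langle\Psi c_a,f_a\rangle=\langle S_0f_a,f_a\rangle+\langle c_a,\Psi^\top f_a\rangle=\langle S_0f_a,f_a\rangle .
\]
By~\ref{F3} the right-hand side is $\geq0$, so $f_a=0$. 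The remaining implications are trivial. Hence $a\mapsto f_a$ has kernel $X_0$, which gives $k=\dim X-\dim X_0$, and $\Psi^\top|_F$ is injective.

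\emph{Part (ii).} By~\eqref{eq:coord_relation}, $S_Mf_a=-f_a$ holds iff $\Psi M\Psi^\top f_a=\Psi c_a$, and by injectivity of $\Psi$ this is iff $M\Psi^\top f_a=c_a$. To apply Lemma~\ref{lem:symmetric_extension}, set $Y=\Psi^\top F$, which has dimension $k$ by (i). Define $T(\Psi^\top f_a)=c_a$. This is well defined: if $\Psi^\top f_a=\Psi^\top f_b$ then $f_a=f_b$ by (i), so $\ell_{a-b}\equiv0$ and $c_a=c_b$.

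The main point is the symmetry $\langle c_a,\Psi^\top f_b\rangle=\langle \Psi^\top f_a,c_b\rangle$, i.e.\ $\langle\Psi c_a,f_b\rangle$ is symmetric in $a,b$. From~\eqref{eq:coord_relation}, $\langle\Psi c_a,f_b\rangle=-\langle f_a,f_b\rangle-\langle S_0f_a,f_b\rangle$. Both terms on the right are symmetric, since $S_0$ is symmetric by~\ref{F3}. Lemma~\ref{lem:symmetric_extension} then gives the affine space of dimension $\tfrac12(N-k)(N-k+1)$.

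\emph{Part (iii).} The $f_a$ span a $k$-dimensional subspace of $\ker(S_M+1)$, so $\dim\ker(S_M+1)\geq k$. The bound by $n_-(M)$ is Theorem~\ref{thm:spectral}.

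Now let $k=N$. Uniqueness of $M$ follows from (ii), the affine space having dimension $0$. Since $Y=\mathbb R^N$, we have $M=\Gamma_F^{-1}$-type: $M$ is congruent to $T$ through the basis $y_a=\Psi^\top f_a$, with quadratic form
\[
\langle My_a,y_a\rangle=\langle c_a,\Psi^\top f_a\rangle=-\|f_a\|^2-\langle S_0f_a,f_a\rangle<0\quad\text{for }f_a\neq0 .
\]
So $M$ is negative definite and $n_-(M)=N$. Thus $S_M$ has at most $N$ negative eigenvalues, and already $N$ independent eigenfunctions with eigenvalue $-1$, so the negative spectrum is exactly $\{-1\}$ with multiplicity $N$.

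\emph{Part (iv).} Translate via Proposition~\ref{prop:boundary_normalization} and~\eqref{eq:boundary_pairing}. For the coordinate data,
\[
\omega(q(\ell_a),q(\ell_b))=-\int_{\partial\Sigma}(f_b\,\partial_\nu\ell_a-f_a\,\partial_\nu\ell_b)\,ds=-\int_{\partial\Sigma}(-f_bf_a+f_af_b)\,ds=0,
\]
so $V$ is isotropic. The data map $q$ is injective on $\mathcal H$ by (d) and~\eqref{eq:green_step5}. Hence the dimension of $V$ is $\dim X-\dim X_0=k$.

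For $V\cap q(K)=0$: if $q(\ell_a)=q(w)$ with $w\in K$, then $\ell_a=w$, so $f_a=0$ and $\ell_a\equiv0$ by (i).

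A complement $\mathcal L_M$ contains $V$ iff the normalized extension of $f_a$ is $\ell_a$. This means $c(\ell_a)=M\Psi^\top f_a$, which is exactly the condition in (ii). When $k=N$, $V$ is itself Lagrangian and transverse to $q(K)$, hence is the unique such complement.

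We expect the main obstacle to be the symmetry of $T$ and the sign computation in (iii). Both hinge on pairing~\eqref{eq:coord_relation} against coordinate traces, using the symmetry and non-negativity of $S_0$.
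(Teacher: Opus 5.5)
Parts (i)--(iii) match the paper's proof, with one harmless variation. In (iii) you show that $M$ is negative definite when $k=N$. This is correct: every $y\in\mathbb R^N$ is $\Psi^\top f_a$, and $\langle My,y\rangle=-\|f_a\|^2-\langle S_0f_a,f_a\rangle$. It is also unnecessary, because the trivial bound $n_-(M)\leq N$ already gives $N\leq\#\{\sigma_j<0\}\leq n_-(M)\leq N$, which is how the paper concludes. The phrase ``$M=\Gamma_F^{-1}$-type'' has no meaning and should be deleted.

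The real problem is in (iv), where you assert that $q$ is injective on $\mathcal H$ ``by (d) and \eqref{eq:green_step5}''. This is false.
\begin{itemize}
\item $\mathcal H$ is infinite-dimensional, while $\mathcal A$ has dimension $2N$.
\item \eqref{eq:green_step5} itself gives $q(u^0_f)=(0,-(2\pi)^{-1}\Psi^\top f)$, which vanishes for every $f$ orthogonal to the $\psi_{\ell,m}$.
\item Concretely, on $C_0$ the bounded extension $r^{-2}\cos2\theta$ of $\cos2\theta$ has $q=0$.
\end{itemize}
Consequently both your count $\dim V=k$ and the step ``$q(\ell_a)=q(w)$ with $w\in K$ implies $\ell_a=w$'' are unsupported as written.

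Both conclusions are nevertheless true. The injectivity you need holds on the coordinate functions alone, and it comes from (i). By \eqref{eq:green_step5}, every $u\in\mathcal H$ satisfies $d(u)-Qc(u)=-(2\pi)^{-1}\Psi^\top(u|_{\partial\Sigma})$, while $q(K)=\{(c,Qc)\}$. Hence $q(\ell_a)\in q(K)$, and in particular $q(\ell_a)=0$, forces $\Psi^\top f_a=0$, so $\ell_a\equiv0$ by (i). This gives $V\cap q(K)=0$. It also shows that $a\mapsto q(\ell_a)$ has kernel exactly $X_0$, so $\dim V=k$. This is precisely the paper's argument.

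The remaining steps of (iv) are correct: isotropy from \eqref{eq:boundary_pairing}, the equivalence $q(\ell_a)\in\mathcal L_M\iff c_a=M\Psi^\top f_a$, and uniqueness of $V$ when $k=N$.
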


\begin{proof}
(i) Pairing~\eqref{eq:coord_relation} with $f_a$ gives $-\|f_a\|^2=\langle S_0f_a,f_a\rangle+\langle c_a,\Psi^\top f_a\rangle$. If $\Psi^\top f_a=0$ then $-\|f_a\|^2=\langle S_0f_a,f_a\rangle\geq0$, so $f_a=0$; and $f_a=0$ forces $\partial_\nu\ell_a=0$, hence $\ell_a\equiv0$ by Lemma~\ref{lem:UCP}.

(ii) By~\eqref{eq:coord_relation}, $S_Mf_a=-f_a$ iff $\Psi M\Psi^\top f_a=\Psi c_a$, iff $M\Psi^\top f_a=c_a$. Let $Y=\Psi^\top F$, of dimension $k$ by (i), and set $T(\Psi^\top f_a)=c_a$; this is well defined because $\Psi^\top f_a=\Psi^\top f_b$ forces $\ell_{a-b}\equiv0$, and symmetric because $\langle T\Psi^\top f_a,\Psi^\top f_b\rangle=\langle\Psi c_a,f_b\rangle=-\langle f_a,f_b\rangle-\langle S_0f_a,f_b\rangle$ is symmetric in $(a,b)$. Apply Lemma~\ref{lem:symmetric_extension}.

(iii) The eigenspace of $-1$ contains $F$; the bound is Theorem~\ref{thm:spectral}. If $k=N$ the affine space in (ii) is a point and $N\leq\#\{\sigma_j<0\}\leq n_-(M)\leq N$.

(iv) By~\eqref{eq:boundary_pairing} and Lemma~\ref{lem:coord_harmonic}, $\omega(q(\ell_a),q(\ell_b))=-\int_{\partial\Sigma}(-f_bf_a+f_af_b)\,ds=0$, so $V$ is isotropic. By~\eqref{eq:green_step5}, $d(\ell_a)-Qc_a=-(2\pi)^{-1}\Psi^\top f_a$, so $q(\ell_a)\in q(K)$ iff $\Psi^\top f_a=0$ iff $\ell_a\equiv0$ by (i); hence $V\cap q(K)=0$ and $\dim V=k$. Finally $q(\ell_a)\in\mathcal L_M$ is exactly condition (ii).
\end{proof}

\begin{theorem}\label{thm:main_body}
Let $\Sigma\subset\mathbb R^3\setminus B^3$ be a properly embedded exterior FBMS with compact boundary and one regular end, and take $N_1=1$, so $N=3$. Then every Lagrangian normalization gives a self-adjoint Steklov operator with compact resolvent, discrete spectrum bounded below and tending to $+\infty$. Unless $\Sigma$ lies in a plane through the origin, $k=3=N$: there is exactly one normalization for which the coordinate traces are eigenfunctions with eigenvalue $-1$, and for it the negative spectrum is exactly $\{-1\}$ with multiplicity $3$.
\end{theorem}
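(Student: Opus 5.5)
The first assertion is a matter of verifying the standing hypotheses of Section~\ref{sec:compactification} and then quoting the general theory. By Lemma~\ref{lem:conformal_end}, a properly embedded exterior FBMS with compact boundary and one regular end has a compact core $\Sigma_0\supset\partial\Sigma$ and a single end conformal to $D^*_\varepsilon$ with the puncture at infinity. Every component meets $\partial\Sigma$ by Definition~\ref{def:exterior_FBMS}. Hence Lemma~\ref{lem:structure} applies with $L=1$ and $N_1=1$, so $N=3$. Proposition~\ref{prop:boundary_normalization} identifies every Lagrangian normalization with some $S_M$, $M$ symmetric. Theorem~\ref{thm:spectral} and Corollary~\ref{cor:bounded_below} then give self-adjointness, compact resolvent, and a discrete spectrum that is bounded below and tends to $+\infty$.

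For the coordinate statement, I would first show that $X=\mathbb R^3$. Remark~\ref{rem:orders}, via Lemma~\ref{lem:conformal_end}, shows that each coordinate function has principal part a combination of $-\log|\zeta|$, $\operatorname{Re}\zeta^{-1}$ and $\operatorname{Im}\zeta^{-1}$ plus a bounded term. So every $\ell_a$ lies in $\mathcal H$.

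By Proposition~\ref{prop:coordinate_eigenfunctions}(i), $k=3-\dim X_0$, and $X_0=\{a:\ell_a\equiv0\}$. If $a\neq0$ lies in $X_0$, then $\langle a,\mathbf x\rangle\equiv0$ on $\Sigma$, so $\Sigma$ lies in the plane $a^\perp$ through the origin. Outside that case $X_0=0$ and $k=3=N$. Proposition~\ref{prop:coordinate_eigenfunctions}(iii)--(iv) then give uniqueness of $M$, equivalently of the Lagrangian complement $V$, and show that the negative spectrum is exactly $\{-1\}$ with multiplicity $3$.

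The multiplicity needs a short check. The eigenspace of $-1$ contains $F$, and $\dim F=3$. Moreover $S_M$ has at most $n_-(M)\leq3$ negative eigenvalues counted with multiplicity, so there is no further negative spectrum and the multiplicity is exactly $3$.

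The main obstacle is the geometric input: confirming that the coordinate functions really lie in $\mathcal H$ with $N_1=1$ for both catenoidal and planar ends. This is where Lemma~\ref{lem:conformal_end} is used, in the following steps.
\begin{enumerate}
\item Construct the conformal coordinate $\zeta$ from the graph expansion~\eqref{eq:regular_end_n2}, for instance by taking $\zeta^{-1}$ to be a holomorphic function asymptotic to $x_1+ix_2$ (or its conjugate).
\item Check that $x_1-ix_2=c\zeta^{-1}+O(1)$ with $c\neq0$.
\item Check that $x_3=a\log r+O(1)=-a\log|\zeta|+O(1)$.
\end{enumerate}
The point of these checks is that the remainders are bounded, not merely small relative to the principal parts, since membership in $\mathcal H$ requires a bounded difference. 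I would argue this by writing $\zeta^{-1}=x_1+ix_2+h$ with $h$ holomorphic and $o(r)$, and then using the Fourier expansion on the punctured disk (as in Lemma~\ref{lem:structure}(a)) to show that $h$ is bounded.

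Finally, I would note that the exceptional planar case is consistent with the statement. There $\Sigma=C_0$, $k=2<N$, and uniqueness fails; this is why the statement excludes that case rather than asserting anything about it.
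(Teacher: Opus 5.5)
Your proposal is correct and is essentially the paper's own proof. Lemma~\ref{lem:conformal_end} with Remark~\ref{rem:orders} gives the standing hypotheses and $X=\mathbb R^3$, Corollary~\ref{cor:bounded_below} gives the spectral assertions, and Proposition~\ref{prop:coordinate_eigenfunctions}(i),(iii) gives $k=3-\dim X_0=3=N$, uniqueness of $M$, and the count $3\le\#\{j:\sigma_j<0\}\le n_-(M)\le 3$.

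Your sketch of the geometric input is not needed once you cite that lemma, and the main argument does not depend on it. As written it is also slightly off: $x_1+ix_2$ is not holomorphic on a non-flat end (cf.\ Remark~\ref{rem:orientation}), so your $h$ would only be harmonic. In fact the bound $|\mathbf x|=O(|\zeta|^{-1})$ from Lemma~\ref{lem:conformal_end}(2), together with the Fourier expansion of harmonic functions on $D^*_\varepsilon$, already places each $x_i$ in $\mathcal H$, because a $-\log|\zeta|$ term is itself admissible.
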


\begin{proof}
The hypotheses of Section~\ref{sec:compactification} hold by Lemma~\ref{lem:conformal_end}, and $X=\mathbb R^3$ with $N_1=1$ by Remark~\ref{rem:orders}. The first claim is Corollary~\ref{cor:bounded_below}. Since $X=\mathbb R^3$, by definition $X_0\neq0$ exactly when $\Sigma\subset a^\perp$ for some $a\neq0$, that is, when $\Sigma$ lies in a plane through the origin; otherwise Proposition~\ref{prop:coordinate_eigenfunctions}(i) gives $k=3-\dim X_0=3=N$, and Proposition~\ref{prop:coordinate_eigenfunctions}(iii) applies.
\end{proof}

\begin{remark}\label{rem:robin_link}
The operator $S_M+\mathrm{Id}$ annihilates the coordinate traces, so a coordinate-compatible realization is exactly a Robin-type realization for which the coordinate functions are harmonic with vanishing Robin data.
\end{remark}

\begin{remark}\label{rem:sigma_minus_one}
For one end, with $\Sigma$ not in a plane through the origin (so that $k=N$), the quantity $|\sigma_{-1}|\operatorname{Length}(\partial\Sigma)$ proposed in the introduction equals $\operatorname{Length}(\partial\Sigma)$, since $\sigma_{-1}=-1$ exactly. It is therefore not an independent functional in the one-ended case; with several ends the coordinate-compatible normalization is no longer unique and the quantity can depend on the choice.
\end{remark}

\section{Applications and Examples}\label{sec:applications}

\subsection{\texorpdfstring{The Mazet--Mendes family $C_\alpha$}{The Mazet-Mendes family C\_alpha}}\label{subsec:mazet_mendes}

The rotational family $C_\alpha$, $\alpha\in[0,\pi/2)$, has boundary at height $\sin\alpha$ \cite[Section~4]{MazetMendes2022}. Here $C_0=\{z=0\}\setminus B^3$ is the exterior of the unit disk in a plane --- a planar exterior domain, not a disk --- while $\alpha>0$ gives a catenoidal end.

\begin{example}\label{ex:catenoid_family}
For $\alpha>0$, Theorem~\ref{thm:main_body} gives discrete spectrum with eigenvalue $-1$. Stability changes at $\alpha=\pi/4$ \cite[Proposition~4.1]{MazetMendes2022}, but the Steklov conclusion does not. By \cite[Theorem~1.2]{MazetMendes2022}, these surfaces and $C_0$ exhaust, up to rotation, the properly embedded one-regular-end class. Separation of variables on the end gives the spectrum of the unique coordinate realization: $\{-1,-1,-1\}\cup\{n/\cos\alpha:\ n\geq2\}$, each positive eigenvalue with multiplicity two, where $\cos\alpha$ is the radius of the boundary circle; in particular $0$ is not an eigenvalue.
\end{example}

\begin{example}\label{ex:flat_disk}
The end of $C_0$ is planar, but nothing in Sections~\ref{sec:compactification}--\ref{sec:proof_discrete} distinguishes planar from catenoidal ends (Remark~\ref{rem:harmonic_planar}), so the construction applies unchanged. Here $V=\operatorname{span}\{q(x),q(y)\}$ is two-dimensional, so $N-k=1$ and the coordinate-compatible normalizations form a one-parameter family. The member $q(u)\in\operatorname{span}\{q(1),q(x),q(y)\}$ is equivalently obtained by adding zero logarithmic flux to the two coordinate constraints. It selects $r\cos\theta,r\sin\theta$ and gives
\[
\{-1, -1,\; 0,\; 2, 2,\; 3, 3,\; \ldots\}.
\]
Here $-1$ comes from the two coordinates, $0$ from constants, and $n\geq2$ from the decaying modes, with the displayed multiplicities.
\end{example}

\begin{remark}\label{rem:comparison_BGGLP}
On $C_0$, the bounded far-field normalization of \cite{BGGLP2026} selects $r^{-1}\cos\theta$ and eigenvalue $1$, whereas $\omega(u,x_1)=0$ selects $r\cos\theta$ and eigenvalue $-1$; their pairings with $x_1$ are $-2\pi$ and $0$. For $n\geq2$ both realizations select the decaying mode, while constants give zero. Thus
\begin{align*}
\text{Far-field normalization \cite{BGGLP2026}:}& \quad \{0,\; 1, 1,\; 2, 2,\; 3, 3,\; \ldots\}, \\
\text{Lagrangian normalization (ours):}& \quad \{-1, -1,\; 0,\; 2, 2,\; 3, 3,\; \ldots\}.
\end{align*}
The discrepancy is a change of self-adjoint realization, not a contradiction.
\end{remark}

\section{Several Ends}\label{sec:multiple_ends}\label{subsec:transversality}\label{subsec:multi_scope}

Sections~\ref{sec:compactification}--\ref{sec:proof_discrete} were stated for $L\geq1$ ends throughout, so the multi-end case needs no new analysis. By Lemma~\ref{lem:conformal_end} a surface with compact boundary and finitely many regular ends has one punctured conformal disk per end; the compactification fills all punctures at once, $N=\sum_\ell(2N_\ell+1)$, and Theorem~\ref{thm:krein}, Theorem~\ref{thm:spectral} and Proposition~\ref{prop:coordinate_eigenfunctions} apply verbatim. Catenoidal and planar ends are treated identically; the type of an end reappears only through which coordinate functions have a principal part (Remark~\ref{rem:harmonic_planar}).

\begin{theorem}\label{thm:multiple_ends}
Let $\Sigma$ be an exterior FBMS with compact boundary and finitely many regular ends, and take $N_\ell=1$ for every $\ell$, so $N=3L$. Then:
\begin{enumerate}[label=(\roman*)]
\item every Lagrangian complement $\mathcal L$ of $q(K)$ gives a self-adjoint Steklov operator $\mathcal S_{\mathcal L}$ with compact resolvent, discrete real spectrum bounded below and accumulating only at $+\infty$, and with at most $\operatorname{rank}M$ negative eigenvalues, where $\mathcal L=\mathcal L_M$;
\item the coordinate-compatible normalizations form a non-empty affine family, and for each of them every non-zero coordinate trace is an eigenfunction with eigenvalue $-1$; all three traces are non-zero unless $\Sigma$ lies in a plane through the origin;
\item the eigenvalue asymptotics of the compact surface $(\overline\Sigma,\hat g)$ transfer to every such realization up to an index shift of at most $\operatorname{rank}M$.
\end{enumerate}
\end{theorem}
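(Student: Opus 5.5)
The proof assembles results already in place; the only new input is checking hypotheses for $L$ ends. First, by Lemma~\ref{lem:conformal_end}, an exterior FBMS with compact boundary and finitely many regular ends satisfies the standing hypotheses of Section~\ref{sec:compactification}. Concretely: outside a compact core containing $\partial\Sigma$ it is a disjoint union of $L$ conformal punctured disks with the punctures at infinity. Every component meets $\partial\Sigma$ by Definition~\ref{def:exterior_FBMS}. By Remark~\ref{rem:orders}, in a suitable conformal coordinate on each end every coordinate function has principal part a combination of $-\log|\zeta_\ell|$, $\operatorname{Re}\zeta_\ell^{-1}$, $\operatorname{Im}\zeta_\ell^{-1}$ plus a bounded harmonic remainder. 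Hence with $N_\ell=1$ we have $\ell_a\in\mathcal H$ for all $a$, so $X=\mathbb R^3$ and $N=3L$. The main point to verify is this end-by-end membership. The frame rotation in Lemma~\ref{lem:conformal_end} is chosen separately for each end, but membership in $\mathcal H$ is coordinate-independent by the remark after Definition~\ref{def:admissible}. So rotating the frame does not affect membership, and catenoidal and planar ends are handled identically.

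For (i), Proposition~\ref{prop:boundary_normalization} identifies every Lagrangian complement with some $\mathcal L_M$, $M$ symmetric, and gives $\mathcal S_{\mathcal L}=S_M$. Theorem~\ref{thm:spectral} then yields self-adjointness on $H^1(\partial\Sigma)$, compact resolvent, and discrete real spectrum bounded below by $-\|M\|\|\Gamma\|$ and tending to $+\infty$. It also bounds the number of negative eigenvalues by $n_-(M)\le\operatorname{rank}M$. This is exactly Corollary~\ref{cor:bounded_below}. Part (iii) is the counting inequality~\eqref{eq:counting}, giving $|\mathcal N_M(\tau)-\mathcal N_0(\tau)|\le\max(n_+,n_-)\le\operatorname{rank}M$. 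Here $S_0$ is the Dirichlet-to-Neumann map of $(\overline\Sigma,\hat g)$ by Lemma~\ref{lem:compactification}, so its asymptotics transfer with that index shift.

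For (ii), apply Proposition~\ref{prop:coordinate_eigenfunctions} with $X=\mathbb R^3$. Part (ii) of that proposition shows the symmetric $M$ with $M\Psi^\top f_a=c_a$ for all $a$ form a non-empty affine space of dimension $\tfrac12(N-k)(N-k+1)$. Equivalently, by part (iv), the Lagrangian complements of $q(K)$ containing the isotropic space $V$ form this family. For each such $M$, relation~\eqref{eq:coord_relation} gives $S_Mf_a=-f_a$, so every non-zero trace $f_a$ is a $(-1)$-eigenfunction. Finally, by Proposition~\ref{prop:coordinate_eigenfunctions}(i), $f_a=0$ holds iff $\ell_a\equiv0$ (via Lemma~\ref{lem:UCP}), iff $\Sigma\subset a^\perp$. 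Thus if $\Sigma$ lies in no plane through the origin, then $f_{e_1},f_{e_2},f_{e_3}$ are all non-zero, indeed linearly independent, with $k=3$.
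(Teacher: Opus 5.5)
Your proposal is correct and matches the paper's proof: the paper also cites Corollary~\ref{cor:bounded_below} with Proposition~\ref{prop:boundary_normalization} for (i), Proposition~\ref{prop:coordinate_eigenfunctions} with Lemma~\ref{lem:conformal_end} supplying the hypotheses for (ii), and the counting bound~\eqref{eq:counting} for (iii). One small point: the per-end frame rotation is harmless because $\mathcal H$ is a vector space and each $\ell_a$ is, on every end, a linear combination of that end's rotated coordinates; the remark after Definition~\ref{def:admissible} only covers changes of the conformal puncture coordinate, not rotations of the ambient frame.
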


\begin{proof}
(i) is Corollary~\ref{cor:bounded_below} together with Proposition~\ref{prop:boundary_normalization}; (ii) is Proposition~\ref{prop:coordinate_eigenfunctions}(i)--(iii) with Lemma~\ref{lem:conformal_end} supplying the hypotheses of Section~\ref{sec:compactification}; (iii) is the counting bound~\eqref{eq:counting}.
\end{proof}

What the ends change is only the size of the family. The coordinate data $V$ have dimension $k\leq3$, with $k=3$ unless $\Sigma$ lies in a plane through the origin (Proposition~\ref{prop:coordinate_eigenfunctions}(i)), while $N=3L$. For one end $k=N$ and the normalization is unique (Theorem~\ref{thm:main_body}); for $L\geq2$ the coordinate-compatible normalizations form an affine family of dimension $\tfrac12(N-k)(N-k+1)>0$, and the coordinates alone single out no member. Every realization differs from the compact Dirichlet-to-Neumann map of $(\overline\Sigma,\hat g)$ by a symmetric operator of rank at most $N$, so $N$ also bounds the number of negative eigenvalues. The choice of orders $(N_\ell)$ is a modelling decision; $N_\ell=1$ retains exactly the coordinate functions. No \emph{connected} exterior FBMS in $\mathbb R^3$ with $L\geq2$ regular ends is known (disjoint unions, such as $C_\alpha$ together with its mirror image in $\{z=0\}$, trivially have $L=2$); Theorem~\ref{thm:multiple_ends} describes connected ones should they exist.

\section{Intrinsic Generalization and Higher Codimension}\label{sec:higher_codim}

In Sections~\ref{sec:proof_discrete} and~\ref{sec:multiple_ends} we worked with exterior minimal surfaces in $\mathbb{R}^3 \setminus B^3$. We now separate the intrinsic analytic theorem from its geometric applications:
\begin{itemize}
    \item The \textbf{intrinsic discreteness theorem} (compact resolvent, self-adjointness, and discrete spectrum bounded below) applies to any Riemannian surface with compact boundary and finitely many intrinsic regular ends, at any choice of orders $(N_\ell)$ (Theorem~\ref{thm:codim_discrete}). It requires neither an ambient immersion nor minimality.
    \item For an exterior FBMS $\Sigma^2 \subset \mathbb{R}^n \setminus B^n$, the \textbf{coordinate-eigenfunction statement} ($\sigma = -1$ with multiplicity inherited from the immersion) applies to any \emph{admissible} subset of ambient coordinates --- those whose restrictions to $\Sigma$ lie in $\mathcal H$ for the chosen orders (Theorem~\ref{thm:codim_coord_eigen}).
\end{itemize}
This separation is important: as the example of the Fraser--Sargent exterior surfaces $EFS_{k,l} \subset \mathbb{R}^4 \setminus B^4$ \cite{MedvedevMorozov2022} shows (Section~\ref{subsec:FS_example}), not all ambient coordinates need lie in $\mathcal H$, even in the simplest higher-codimension examples. As in Section~\ref{sec:multiple_ends}, $L$ denotes the number of ends.

\subsection{Intrinsic analysis does not require an immersion}\label{subsec:codim_intrinsic}

The spectral machinery of Sections~\ref{sec:compactification}--\ref{sec:multiple_ends} is \emph{intrinsic}: the compactification and the space $\mathcal H$ are defined by the conformal structure alone; the analytic input is the Dirichlet-to-Neumann map of a compact surface together with a bounded smoothing correction; and the finite-rank decomposition uses the intrinsic Dirichlet kernel. End multiplicity does not enter at all: a punctured disk is a punctured disk whatever the
conformal factor, so the multiplicity changes neither $\mathcal H$ nor $\mathcal A$, and shows up
only in where the ambient coordinate functions sit. None of these ingredients refers to an ambient immersion or to minimality.

Nor does the intrinsic theory require orientability: the standing convention of Section~\ref{sec:notation} is used only in the extrinsic hypersurface discussion. The ends are annuli and hence orientable regardless of the global topology.

In particular, the exterior half $EFS_{2,1}$ considered below is an annulus; the M\"obius-band topology belongs to the full Fraser--Sargent surface and its compact interior portion (Section~\ref{subsec:FS_example}).

We first formulate the analytic end hypothesis abstractly. The definition of regular ends in Section~\ref{subsec:planar_ends} is extrinsic and specific to hypersurfaces in $\mathbb{R}^3$, while the intrinsic theory only needs the following metric structure at infinity. The geometric applications below reintroduce the ambient immersion and, as the Fraser--Sargent surfaces show, may involve a further discrete invariant: the end multiplicity.

\begin{definition}\label{def:intrinsic_end}
An end $E$ of a Riemannian surface $(\Sigma, g)$ is an \emph{intrinsic regular end of multiplicity $m \in \mathbb{N}$} if $E$ admits conformal coordinates $(t, \theta) \in [t_0, \infty) \times S^1$ in which
\[
g = \lambda(t, \theta)\,(dt^2 + d\theta^2), \qquad \lambda = c\,e^{2mt}\bigl(1 + O(e^{-\epsilon t})\bigr)
\]
for some constants $c, \epsilon > 0$, the error term being preserved by $(t,\theta)$-derivatives. We write $r = e^t$ for the conformal radial coordinate.
\end{definition}

Definition~\ref{def:intrinsic_end} is an explicit analytic hypothesis. Finite total curvature gives a conformal puncture and meromorphic Weierstrass data, but in arbitrary codimension we do not infer from it alone the derivative-controlled metric expansion required in the definition. The hypothesis holds for the regular multiplicity-one ends used in Section~\ref{sec:multiple_ends} and, by direct computation, for the ends of $EFS_{k,l} \subset \mathbb{R}^4 \setminus B^4$, where $m=k$ (Section~\ref{subsec:FS_example}).

An intrinsic regular end is a conformal punctured disk: in the coordinates of Definition~\ref{def:intrinsic_end} put $\zeta=e^{-(t+i\theta)}$, so that the part $\{t>t_0\}$ of $E$ is $\{0<|\zeta|<e^{-t_0}\}$, $\zeta\to0$ exactly when $t\to\infty$, and $g=\mu|d\zeta|^2$ with $\mu>0$ smooth. The hypotheses of Section~\ref{sec:compactification} therefore hold verbatim, and with them Theorems~\ref{thm:krein} and~\ref{thm:spectral}, Proposition~\ref{prop:boundary_normalization} and Proposition~\ref{prop:coordinate_eigenfunctions}. No immersion, orientability, completeness or curvature bound is used.

What the multiplicity controls is not the function space but the growth that a coordinate function can have. On an end of multiplicity $m$ the intrinsic radial variable is $r=e^t$, and the growing harmonic modes $r^{n}\cos n\theta$, $r^{n}\sin n\theta$, $n\in\mathbb N$, are the real and imaginary parts of the principal part $\zeta^{-n}$; hence retaining the modes of growth rate $n$ means
\begin{equation}\label{eq:multiplicity_admissibility}
n\leq N_\ell.
\end{equation}
The choice
\begin{equation}\label{eq:adapted_window}
N_\ell=1
\end{equation}
retains exactly the four non-decaying modes $1$, $\log r$, $r\cos\theta$, $r\sin\theta$, as for the regular ends in $\mathbb R^3$; a coordinate function growing like $r^{\,\mathrm{p}}$ requires $N_\ell\geq \mathrm{p}$.

\paragraph{Intrinsic asymptotic data.} The data map $q$, the symplectic form $\omega$ and the Lagrangian dictionary of Section~\ref{sec:steklov_weighted} are defined from the conformal chart alone, so they carry over unchanged, with $N=\sum_\ell(2N_\ell+1)$.

\begin{theorem}[Intrinsic discreteness]\label{thm:codim_discrete}
Let $(\Sigma,g)$ be a Riemannian surface with smooth compact boundary, every component of which meets $\partial\Sigma$, and with finitely many intrinsic regular ends. Fix orders $(N_\ell)$. Then every Lagrangian complement $\mathcal L$ of $q(K)$ in $\mathcal A$ determines a self-adjoint Steklov operator $\mathcal S_{\mathcal L}$ on $L^2(\partial\Sigma)$ with domain $H^1(\partial\Sigma)$, compact resolvent, and discrete real spectrum bounded below and accumulating only at $+\infty$. It is a classical elliptic pseudodifferential operator of order one with principal symbol $|\xi|$, and its counting function differs from that of the compactified Dirichlet-to-Neumann map by at most $\operatorname{rank}M$.
\end{theorem}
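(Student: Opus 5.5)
The plan is to reduce everything to the standing hypotheses of Section~\ref{sec:compactification} and then quote the abstract results already proved there. The geometric step is to turn each intrinsic regular end into a conformal punctured disk. On an end $E_\ell$ with coordinates $(t,\theta)\in[t_0,\infty)\times S^1$ as in Definition~\ref{def:intrinsic_end}, I will set $\zeta_\ell=e^{-(t+i\theta)}$. This is a diffeomorphism of $\{t>t_0\}\times S^1$ onto $D^*_{\varepsilon}$ with $\varepsilon=e^{-t_0}$. Since $|d\zeta|^2=e^{-2t}(dt^2+d\theta^2)$, we get $g=\mu_\ell|d\zeta|^2$ with $\mu_\ell=\lambda e^{2t}$, which is smooth and positive. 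I will shrink $t_0$ slightly if needed so that the chart is open and $\Sigma_0:=\Sigma\setminus\bigcup_\ell\{t>t_0+1\}$ is compact and contains $\partial\Sigma$. This requires that "finitely many ends" means the complement of the ends' interiors is compact, which I take as the meaning of the hypothesis. The punctures then lie at infinity because $\zeta_\ell\to0$ exactly when $t\to\infty$. The metric asymptotics $\lambda\sim ce^{2mt}$ are not needed at all; only smoothness and positivity of $\mu_\ell$ are used. The multiplicity $m$ therefore plays no role.

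Once the standing hypotheses hold, with arbitrary fixed orders $N_\ell\geq1$, Lemma~\ref{lem:compactification} produces the compact surface $(\overline\Sigma,\hat g)$ with $\hat g=g$ near $\partial\Sigma$. Lemma~\ref{lem:structure} and Proposition~\ref{prop:A_symplectic} provide $\mathcal H$, $K$, $\Psi$, $Q$ and the symplectic space $\mathcal A$. None of these constructions used an immersion, minimality or orientability. For orientability, each end chart is an annulus, and \ref{F3} holds for compact possibly non-orientable surfaces, because the Dirichlet-to-Neumann map is local-symbol elliptic and coercive regardless of orientation. I will add a sentence checking that the cited facts do not need orientation. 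Proposition~\ref{prop:boundary_normalization} then identifies any Lagrangian complement $\mathcal L$ of $q(K)$ with some $\mathcal L_M$, $M$ symmetric, and gives $\mathcal S_{\mathcal L}=S_M=S_0+\Psi M\Psi^\top$.

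The spectral conclusions are Theorem~\ref{thm:spectral} applied to this $M$. The closure of $S_M$ is self-adjoint on $H^1(\partial\Sigma)$, has compact resolvent, is a classical pseudodifferential operator of order one with symbol $|\xi|$, and its spectrum is bounded below by $-\|M\|\|\Gamma\|$ and tends to $+\infty$. For the counting statement, \eqref{eq:counting} gives $|\mathcal N_M(\tau)-\mathcal N_0(\tau)|\leq\max(n_+(M),n_-(M))\leq\operatorname{rank}M$, as in Corollary~\ref{cor:bounded_below}.

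The only step that needs care is the first one: the bookkeeping that the intrinsic end definition really meets the compactness convention of the standing hypotheses, namely that $\Sigma_0$ together with the outer annuli is compact. Everything after that is citation. If the definition of "finitely many ends" turns out to be ambiguous, I will state explicitly that the union of the ends' complements is compact and deduce the puncture-at-infinity condition from properness of $t$ on each end.
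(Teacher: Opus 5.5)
Your proposal is correct and follows the paper's own route: the substitution $\zeta=e^{-(t+i\theta)}$ makes each intrinsic regular end a conformal punctured disk with smooth positive factor $\mu=\lambda e^{2t}$, so the standing hypotheses of Section~\ref{sec:compactification} hold, and the theorem then follows from Theorem~\ref{thm:spectral} together with Proposition~\ref{prop:boundary_normalization}, including your bound $\max(n_+(M),n_-(M))\leq\operatorname{rank}M$ for the counting functions. There is one wording slip: to make the chart open you restrict to $t>t_0$, which means enlarging $t_0$, not shrinking it.
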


\begin{proof}
Each end is a conformal punctured disk, so Section~\ref{sec:compactification} applies; the conclusion is Theorem~\ref{thm:spectral} together with Proposition~\ref{prop:boundary_normalization}.
\end{proof}

\subsection{Coordinates in higher codimension}\label{subsec:codim_isotropic}

Let now $\Sigma^2\subset\mathbb R^n\setminus B^n$ be an exterior FBMS in arbitrary codimension, with the conventions of Definition~\ref{def:exterior_FBMS}, whose ends satisfy the hypotheses of Theorem~\ref{thm:codim_discrete}. As in Section~\ref{subsec:coord_eigen} put $\ell_a=\langle a,\mathbf x\rangle$, $f_a=\ell_a|_{\partial\Sigma}$, let $X=\{a\in\mathbb R^n:\ell_a\in\mathcal H\}$ be the admissible directions for the chosen orders $(N_\ell)$, $X_0=\{a\in X:\ell_a\equiv0\}$, $k=\dim X-\dim X_0$, and $V=\{q(\ell_a):a\in X\}$.

\begin{theorem}\label{thm:codim_coord_eigen}\label{prop:isotropy_codim}\label{thm:transversality_codim}\label{cor:coord_lagrangian_codim}\label{rem:VI_not_lagrangian}
Proposition~\ref{prop:coordinate_eigenfunctions} holds verbatim in $\mathbb R^n$ and at every order. Thus $V$ is isotropic of dimension $k$ and $V\cap q(K)=\{0\}$; the Lagrangian complements of $q(K)$ containing $V$ form a non-empty affine family of dimension $\tfrac12(N-k)(N-k+1)$; for each of them $\mathcal S_{\mathcal L}f_a=-f_a$ for every $a\in X$, so that $\dim\ker(\mathcal S_{\mathcal L}+1)\geq k$; and if $k=N$ the normalization is unique and its negative spectrum is exactly $\{-1\}$, of multiplicity $N$. In particular $V$ is Lagrangian exactly when $k=N$, which fails as soon as $N>n$.
\end{theorem}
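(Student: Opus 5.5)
The plan is to check that the proof of Proposition~\ref{prop:coordinate_eigenfunctions} uses only intrinsic facts, each available in $\mathbb R^n$ at any order. The inputs to that proof are four.
\begin{enumerate}[label=(\arabic*)]
\item Lemma~\ref{lem:coord_harmonic}: each $\ell_a$ is harmonic with $\partial_\nu\ell_a=-f_a$.
\item Relation~\eqref{eq:coord_relation}, which follows from Lemma~\ref{lem:structure}(d) once $\ell_a\in\mathcal H$.
\item Unique continuation, Lemma~\ref{lem:UCP}.
\item The structural results: Lemma~\ref{lem:symmetric_extension}, Theorem~\ref{thm:spectral}, the pairing~\eqref{eq:boundary_pairing} and~\eqref{eq:green_step5}.
\end{enumerate}

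For (1), minimality in any codimension gives $\Delta_g\mathbf x=0$, hence $\Delta_g\ell_a=0$. The conormal identity $d\mathbf x(\nu)=-\mathbf x$ is recorded in Definition~\ref{def:exterior_FBMS} for $(\mathbb R^n,B^n,S^{n-1})$. Its proof is the argument of Lemma~\ref{lem:coord_harmonic}. First, $|\mathbf x|=1$ on $\partial\Sigma$ gives $\mathbf x\perp d\mathbf x(T\partial\Sigma)$. Second, $\mathbf x\in T_p\Sigma$ then forces $\mathbf x=\pm d\mathbf x(\nu)$. Third, $|\mathbf x|\ge1$ fixes the sign. For (2), the definition of $X$ restricts to directions with $\ell_a\in\mathcal H$ for the chosen orders, so Lemma~\ref{lem:structure}(d) applies to exactly these $a$. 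The hypotheses of Section~\ref{sec:compactification} hold because intrinsic regular ends are conformal punctured disks, as noted before Theorem~\ref{thm:codim_discrete}. Items (3) and (4) are purely intrinsic and were proved for arbitrary $L$ and $(N_\ell)$.

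With these inputs in place, parts (i)--(iv) of Proposition~\ref{prop:coordinate_eigenfunctions} go through line by line. Replacing $\mathbb R^3$ by $\mathbb R^n$ only changes the index set of $a$. I would still recheck three steps. The first is well-definedness of $T$ in (ii): if $\Psi^\top f_a=\Psi^\top f_b$, then (i) applied to $a-b\in X$ gives $\ell_{a-b}\equiv0$, using that $X$ is a linear subspace because $\mathcal H$ is. The second is isotropy of $V$, which comes from~\eqref{eq:boundary_pairing} combined with $\partial_\nu\ell_a=-f_a$. The third is $V\cap q(K)=0$, which comes from~\eqref{eq:green_step5}. The dimension count $\tfrac12(N-k)(N-k+1)$ is Lemma~\ref{lem:symmetric_extension}. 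The uniqueness statement for $k=N$ and the identification of the negative spectrum are part (iii), using the bound of Theorem~\ref{thm:spectral}.

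The last sentence needs separate arguments in each direction. Suppose $V$ is Lagrangian. A Lagrangian subspace of $\mathcal A=\mathbb R^{2N}$ has dimension $N$, and $\dim V=k$, so $k=N$. Conversely, if $k=N$ then $V$ is isotropic of dimension $N$, hence Lagrangian. Finally, $k=\dim X-\dim X_0\le\dim X\le n$, so $k=N$ is impossible once $N>n$.

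The main obstacle is bookkeeping rather than analysis. One must make sure that restricting to the admissible set $X$ loses nothing in steps (i)--(iv). In particular, (i) must be applied only to differences of elements of $X$, and the conormal sign in codimension $>1$ must follow from tangency of $\mathbf x$ alone, with no appeal to a unit normal. Both points reduce to the remarks above.
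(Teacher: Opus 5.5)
Your proposal is correct and follows the paper's route. The paper's proof likewise observes that the argument for Proposition~\ref{prop:coordinate_eigenfunctions} uses only two inputs: the identities $\Delta_g\ell_a=0$ and $\partial_\nu\ell_a=-f_a$ for $a\in X$ (Lemma~\ref{lem:coord_harmonic}, valid in any codimension), and the order-independent results of Sections~\ref{sec:compactification}--\ref{sec:proof_discrete}; it then reads off the last sentence from $k\leq\dim X\leq n$, and your additional checks (linearity of $X$ for well-definedness of $T$, the dimension count for Lagrangianity of $V$, and the tangency-only derivation of the conormal sign) are sound elaborations of that same argument.
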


\begin{proof}
The proof of Proposition~\ref{prop:coordinate_eigenfunctions} uses only $\Delta_g\ell_a=0$ and $\partial_\nu\ell_a=-\ell_a$ for $a\in X$, which hold in every codimension by Lemma~\ref{lem:coord_harmonic}, and the results of Sections~\ref{sec:compactification}--\ref{sec:proof_discrete}, which hold at every order (Theorem~\ref{thm:codim_discrete}). The last sentence follows from $k\leq\dim X\leq n$.
\end{proof}

At a given order a coordinate-compatible realization contains only the admissible coordinates; enlarging $(N_\ell)$ enlarges $X$ and can bring further coordinates in, at the cost of enlarging $N$ and hence the family of realizations. By~\eqref{eq:counting} a realization with $M\geq0$, such as the bounded one $M=0$, has no negative spectrum at all.

\subsection{Example: Fraser--Sargent exterior surfaces}\label{subsec:FS_example}

The Fraser--Sargent exterior surfaces $EFS_{k,l} \subset \mathbb{R}^4 \setminus B^4$ (parameters $k > l \geq 1$ relatively prime) of Medvedev--Morozov \cite{MedvedevMorozov2022} are one-ended $2$-surfaces in $\mathbb{R}^4$. In the conformal coordinates $(t, \theta)$ of the end, the induced metric is $g = \lambda(t)(dt^2 + d\theta^2)$ with $\lambda(t) = c\,e^{2kt}\bigl(1 + O(e^{-2(k-l)t})\bigr)$, so by Definition~\ref{def:intrinsic_end} the end is an intrinsic regular end of multiplicity $m = k$; in the intrinsic conformal radial coordinate $r = e^t$ --- \emph{not} the ambient radius, which has order $r^k$ --- the coordinate pairs $(x_1,x_2)$ and $(x_3,x_4)$ have leading orders $r^l$ and $r^k$, respectively. Admissibility is governed by the order alone: by~\eqref{eq:multiplicity_admissibility} the growing modes $r^{n}\cos n\theta$, $r^{n}\sin n\theta$ are retained precisely when $n\leq N_1$, and the multiplicity $m=k$ plays no part in it. At the order~\eqref{eq:adapted_window}, $N_1=1$, the admitted growing modes are exactly $1$, $\log r$, $r\cos\theta$ and $r\sin\theta$, as in the $\mathbb{R}^3$ analysis; every decaying mode stays in $\mathcal H$ at every order, the constraint being on growth alone.

\emph{Case $l = 1$} (any $k \geq 2$): the coordinate pair $(x_1,x_2)$ has leading order $r$ and is admissible, while $(x_3,x_4)$ has leading order $r^k$ and is not admissible; the boundary traces of $x_1,x_2$ are nonzero multiples of $\cos\theta$, $\sin\theta$ --- linearly independent --- so $X=\mathrm{span}\{e_1,e_2\}$ and $V = \mathrm{span}\{q(x_1), q(x_2)\}$ with $k = 2$. Theorem~\ref{thm:codim_discrete} gives discrete spectrum for any complementary Lagrangian subspace, and Theorem~\ref{thm:codim_coord_eigen} gives $\sigma = -1$ with multiplicity $\geq 2$ and eigenfunctions $x_1|_{\partial\Sigma}, x_2|_{\partial\Sigma}$; the non-admissible coordinates $x_3, x_4$ are \emph{not} claimed to be eigenfunctions. For $(k,l)=(2,1)$, the full Fraser--Sargent surface and its compact interior part have M\"obius-band topology, but the exterior piece $EFS_{2,1}$ is the image of the half-cylinder $[T_{2,1},\infty)\times S^1$ and is an annulus.

\emph{Case $l \geq 2$}: all four ambient coordinates grow at least like $r^2$, so $X = \{0\}$ at the order~\eqref{eq:adapted_window}; discreteness still holds, but Theorem~\ref{thm:codim_coord_eigen} applies only vacuously and no coordinate eigenfunction is asserted in this realization. Raising the order enlarges $\mathcal A$ and $X$; Theorem~\ref{thm:codim_coord_eigen} applies at every order, and all four coordinates are admissible once $N_1\geq k$. The supporting computations, including the orders needed for the $r^l$ coordinates and for all four coordinates, are collected in Appendix~\ref{app:FS_details}.

\emph{Relation to the Medvedev--Morozov observation.} Medvedev--Morozov \cite[§1.2, item~6]{MedvedevMorozov2022v2} note that \emph{``this Steklov problem does not have necessarily discrete Steklov spectrum since $\Sigma$ is not compact,''} which obstructs the definition of the spectral index. Theorem~\ref{thm:codim_discrete} removes the discreteness obstruction: at every choice of order, every complementary Lagrangian normalization of $EFS_{k,l}$ gives a self-adjoint realization with discrete spectrum and hence a finite counting function
\[
\mathcal N_{\mathcal L}(\tau):=\#\{\sigma_j(\mathcal S_{\mathcal L})<\tau\}
\]
for every fixed $\tau\in\mathbb R$. Choosing a geometrically meaningful threshold at which to call this count an exterior spectral index is a separate question.

The counting function depends on the normalization: by~\eqref{eq:counting} it has no negative part when $M\geq0$, in particular for the bounded realization $M=0$.

\appendix
\section{Deferred Details}\label{app:deferred}

\subsection{Huber--Osserman for surfaces with boundary}\label{app:huber}

Attach smooth compact Riemannian caps along $\partial\Sigma$.  Such caps exist for any smooth
boundary and the construction is intrinsic to $(\Sigma,g)$: in Fermi coordinates a collar metric
is exactly $du^2 + \varphi(u,\theta)^2\,d\theta^2$, and interpolating $\varphi$ to a constant $c$
and closing with the spherical cap $\varphi(u)=c\cos(u/c)$, smooth at the tip, asks nothing of the
boundary curve --- neither roundness nor constant geodesic curvature, and nothing about how it
sits in $S^2$.  The resulting complete surface has no boundary, agrees with $\Sigma$ outside a
compact set, and, the caps being compact, still has finite total negative curvature; Huber's
theorem \cite{Huber1957} therefore compactifies it by one puncture per end, and removing the caps
leaves the same punctured conformal neighborhoods of infinity.  The caps supply conformal type
only and need not be minimal: Osserman's meromorphic analysis of complete minimal surfaces of finite total
curvature~\cite{Osserman1964} is applied to the original minimal immersion on those
neighborhoods; its argument is local at each puncture, using only that a punctured neighborhood
is conformally a punctured disk on which the induced metric is complete and has finite total
curvature (finite total curvature excludes an essential singularity of the Gauss map, and
completeness then excludes one of the Weierstrass function $f$), and for the regular embedded multiplicity-one ends assumed here Schoen's theorem
\cite[Proposition~1]{Schoen1983} gives the planar/catenoidal dichotomy used in
Section~\ref{sec:multiple_ends}.  Neither argument rules out higher multiplicity;
Section~\ref{sec:higher_codim} treats that case under an explicit intrinsic hypothesis.

We record the outcome in the form used throughout.

\begin{lemma}[Conformal structure of a regular end]\label{lem:conformal_end}
Let $\Sigma\subset\mathbb R^3\setminus B^3$ be an exterior FBMS with compact boundary and finitely many regular ends. Then:
\begin{enumerate}[label=(\arabic*)]
\item each end $E_\ell$, shrunk if necessary, carries a conformal diffeomorphism $\zeta_\ell\colon E_\ell\to\{0<|\zeta|<\varepsilon\}$ with $(\zeta_\ell^{-1})^*g=\mu_\ell|d\zeta|^2$, $\mu_\ell>0$ smooth, and $\zeta_\ell\to0$ at infinity; we take $\zeta_\ell$ holomorphic for the orientation used in the Weierstrass representation below;
\item $\log r=-\log|\zeta_\ell|+O(1)$, the estimate being preserved by derivatives in the sense of Section~\ref{subsec:planar_ends};
\item after a rotation of the ambient frame taking the limit normal of $E_\ell$ to $\pm e_3$, $x_1-ix_2=c\,\zeta_\ell^{-1}+O(1)$ with $c\neq0$, and $x_3=a\log r+O(1)$ with $a$ the logarithmic growth rate of~\eqref{eq:catenoidal_polar}, $a=0$ for a planar end.
\end{enumerate}
In particular every ambient coordinate function lies in the space $\mathcal H$ of Definition~\ref{def:admissible} with orders $N_\ell=1$.
\end{lemma}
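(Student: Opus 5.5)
The proof follows the appendix discussion: first obtain the conformal punctured disk, then read off the coordinate expansions from the graph expansion~\eqref{eq:regular_end_n2}. For (1), attach compact caps along $\partial\Sigma$ as described, so that the resulting complete boundaryless surface has finite total curvature. Finiteness holds because $|K|=O(r^{-4})$ on each regular end while $dA\asymp r\,dr\,d\phi$. Huber's theorem then gives one conformal puncture per end. Removing the caps leaves each end conformally equivalent to some $\{0<|\zeta|<\varepsilon\}$ with $\zeta\to0$ at infinity. Orient $\zeta$ to be holomorphic for the orientation of the Weierstrass representation; replacing $\zeta$ by $\bar\zeta$ if necessary costs nothing, by the remark after Definition~\ref{def:admissible}.

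For (2) and (3) I would work directly with the graph chart rather than with Osserman's data alone. After rotating so that the end is a graph over $\{|X|\ge R\}$ in the $e_3$ direction, the function $w:=x_1+ix_2=X$ is a diffeomorphism onto a neighbourhood of infinity in $\mathbb C$. The metric $g=|dX|^2+|df|^2$ differs from the flat metric by $O(r^{-2})$, since $|\nabla f|=O(r^{-1})$, with all derivatives controlled by the bounds on $D^\alpha R_f$. The Weierstrass representation gives the coordinates as $\operatorname{Re}\int\Phi_k$ with $\Phi_k$ meromorphic at $\zeta=0$. Completeness together with the embedded multiplicity-one hypothesis (Schoen, \cite[Proposition~1]{Schoen1983}) forces $\Phi_1-i\Phi_2$ to have a pole of order exactly two with zero residue, or an appropriately signed version of this. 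Integrating yields $x_1-ix_2=c\zeta^{-1}+(\text{holomorphic})+(\text{possible }\log|\zeta|\text{ term})$. The residue, i.e. the flux, in the horizontal directions vanishes because the end has a limit normal $\pm e_3$. Likewise $\Phi_3$ has at most a simple pole with real period zero, so $x_3=-a'\log|\zeta|+O(1)$. Since $|x_1-ix_2|=r$, comparing moduli gives $r=|c|\,|\zeta|^{-1}(1+O(|\zeta|))$, which is (2). Differentiating the expansion gives the derivative version. Then $x_3=a'\log r+O(1)$, and matching with~\eqref{eq:catenoidal_polar} identifies $a'=a$, with $a=0$ for a planar end, where $f$ is bounded.

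Admissibility then follows. $x_1$ and $x_2$ are $\pm\operatorname{Re},\operatorname{Im}$ of $c\zeta^{-1}$ plus a bounded term, hence of the form $\sum_m c_m s_{\ell,m}+O(1)$. $x_3$ equals $a\,s_{\ell,0}+O(1)$. A general $\ell_a$ is a linear combination of these in the rotated frame, and $\mathcal H$ is a vector space by Lemma~\ref{lem:structure}(a), so every coordinate lies in $\mathcal H$ with $N_\ell=1$. Harmonicity is minimality combined with~\ref{F1}.

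The main obstacle is the step excluding a $\log|\zeta|$ term in $x_1-ix_2$ and any higher-order pole. Concretely, this is the verification that the Weierstrass data have exactly the pole structure claimed. I would try to avoid quoting Osserman here by arguing on the graph directly. Set $\zeta_0=1/\bar X$ or $1/X$, which is quasiconformal with dilatation $O(r^{-2})$. Then solve the Beltrami equation near infinity to correct it to a genuinely conformal $\zeta=\zeta_0(1+O(|\zeta_0|))$. Decay of the dilatation like $|\zeta_0|^2$, which is integrable against $|d\zeta_0|^2/|\zeta_0|^2$, makes the correction bounded with a limit, and this gives (2) and (3) directly.
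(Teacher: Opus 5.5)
Your part (1) and the skeleton of (2)--(3) (meromorphic Weierstrass data, Schoen's exact double pole, vanishing horizontal residue, then comparing moduli) follow the paper. However, the step that produces a \emph{bounded} remainder is missing. Integrating does not give ``$c\zeta^{-1}$ plus a holomorphic function''. With $x_k=\operatorname{Re}\int\Phi_k$ one has $x_1-ix_2=\tfrac12\int(\Phi_1-i\Phi_2)\,d\zeta+\tfrac12\overline{\int(\Phi_1+i\Phi_2)\,d\zeta}$, and on the catenoid the remainder is exactly $-\bar\zeta/2$. So an $O(1)$ remainder needs $\Phi_1+i\Phi_2$ to have no pole at all, which you never show. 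The bound ``at most a simple pole'' for $\Phi_3$ is likewise only asserted.

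The missing idea is the normalization hidden in your ``appropriately signed version'': rotate so that the Gauss map satisfies $g(0)=0$. Take $\Phi_1=\tfrac12f(1-g^2)$, $\Phi_2=\tfrac i2f(1+g^2)$, $\Phi_3=fg$ and $f=A\zeta^{-2}+B\zeta^{-1}+O(1)$ with $A\neq0$. Then $\Phi_1+i\Phi_2=-fg^2=O(1)$ and $\Phi_3=O(\zeta^{-1})$ follow at once. Moreover $\operatorname{Res}\Phi_1=B/2$ and $\operatorname{Res}\Phi_2=iB/2$ must both be real for $x_1,x_2$ to be single-valued, which forces $B=0$ and $x_1-ix_2=-A/(2\zeta)+O(1)$.

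Your reason for the vanishing residue, that the horizontal flux is zero ``because the end has limit normal $\pm e_3$'', does not suffice by itself. The end with Weierstrass data $f=\zeta^{-3}-\zeta^{-1}$, $g=\zeta$ (complete, immersed, of multiplicity two) has vertical limit normal and all residues real, yet $\operatorname{Res}\Phi_1=-1$. What kills the horizontal residue is multiplicity one together with $g(0)=0$, i.e.\ the regularity of $fg^2$.

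Your Beltrami fallback, which would make all of this unnecessary, also stops short as written. Integrability of $|\mu|\,|\zeta_0|^{-2}$ is the Teichm\"uller--Wittich--Belinskii condition. It yields conformality at the puncture, $\zeta=a\zeta_0(1+o(1))$, and hence only $x_1-ix_2=c\zeta^{-1}+o(|\zeta|^{-1})$, not $c\zeta^{-1}+O(1)$. The rate $\zeta=a\zeta_0+O(|\zeta_0|^2)$ that you need is true, but you must use the pointwise bound $|\mu|\lesssim|\zeta_0|^2$ quantitatively. One way is the Cauchy-transform representation of the normalized solution together with $L^p$ bounds for the Beurling transform. Another is Schauder theory: the derivative bounds on $R_f$ make $\mu$ of class $C^{1,1}$ at $\zeta_0=0$ with $\mu(0)=0$.

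Once that is supplied, this is a genuinely different and more self-contained route than the paper's. Since $x_1+ix_2=X$ and $x_3=f(X)$ exactly in the graph chart, (2) and (3) can be read off \eqref{eq:regular_end_n2} directly, and even (1) follows without Huber. No appeal to Osserman's meromorphic extension or Schoen's pole structure is needed. The price is quasiconformal regularity theory at the puncture, including scale-invariant higher-derivative estimates for the derivative form of (2), where the paper needs only a residue computation.
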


\begin{proof}
(1) is the Huber--Osserman compactification just described, one puncture per end.

For (2) and (3) recall the two structural facts we import: the Weierstrass data extend
meromorphically across the puncture \cite{Osserman1964}, and for an embedded end of
multiplicity one the horizontal Weierstrass differentials have double poles, so that the horizontal
coordinates have a simple-pole principal part \cite[Proposition~1 and its proof, p.~801]{Schoen1983}. Write
$\zeta=\zeta_\ell$ and use the convention
$\varphi_1=\tfrac12f(1-g^2)$, $\varphi_2=\tfrac i2f(1+g^2)$, $\varphi_3=fg$, so that
\begin{equation}\label{eq:phi_pm}
\varphi_1-i\varphi_2=f,\qquad \varphi_1+i\varphi_2=-fg^2 .
\end{equation}
Rotate the ambient frame so that the limit normal of $E_\ell$ is $\pm e_3$, the sign chosen so that $g(0)=0$, and
multiplicity one forces $f$ to have a pole of order exactly two, so
\[
g=O(\zeta),\qquad f=A\zeta^{-2}+B\zeta^{-1}+O(1),\qquad A\neq0 .
\]
We do \emph{not} assume the zero of $g$ to be simple: it is for a catenoidal end, it has order at
least two for a planar end not contained in a plane, and $g\equiv0$ for a plane. In every case
$fg^2=O(1)$, so by~\eqref{eq:phi_pm} the combination $\varphi_1+i\varphi_2$ is regular at the
puncture --- it equals $-1$ identically on the catenoid $f=\zeta^{-2}$, $g=\zeta$ --- while
$\varphi_1-i\varphi_2=f$ has a double pole.

The coefficient $B$ must vanish, and this is what makes the remainder in (3) bounded rather than
logarithmic. Indeed $\operatorname{Res}\varphi_1=\tfrac12B$ and
$\operatorname{Res}\varphi_2=\tfrac i2B$ by~\eqref{eq:phi_pm}, so going once around the puncture
$x_1$ gains $\operatorname{Re}(2\pi i\cdot\tfrac12B)=-\pi\operatorname{Im}B$ and $x_2$ gains
$\operatorname{Re}(2\pi i\cdot\tfrac i2 B)=-\pi\operatorname{Re}B$. Single-valuedness of the
immersion therefore gives $\operatorname{Im}B=\operatorname{Re}B=0$, that is $B=0$, and
\[
x_1-ix_2=\tfrac12\Bigl(\int f\,d\zeta+\overline{\int(-fg^2)\,d\zeta}\Bigr)
=-\frac{A}{2\zeta}+O(1),
\]
the second integral being bounded. This is (3) with $c=-A/2$. Finally $\varphi_3=fg$ has at worst
a simple pole, its residue is real because $x_3$ is single-valued, and the vertical part is
$a\log r+O(1)$ by~\eqref{eq:catenoidal_polar}; comparing $|\mathbf x|$ with $|\zeta|^{-1}$
gives (2).

The last sentence of the lemma follows since $-\log|\zeta|$, $\operatorname{Re}\zeta^{-1}$ and
$\operatorname{Im}\zeta^{-1}$ are exactly the principal parts~\eqref{eq:principal_parts} allowed
by $N_\ell=1$, and $\operatorname{Re}$ and $\operatorname{Im}$ of $c\,\zeta^{-1}$ span the same
real plane as those of $\overline{c\,\zeta^{-1}}$.
\end{proof}

\begin{remark}[Orientation]\label{rem:orientation}
In general neither of $x_1\pm ix_2$ is a holomorphic function (for the flat plane $x_1-ix_2=\zeta/2$
is): they are conjugate to one another, being combinations of real parts. What the rotation achieves is that the \emph{singular leading term}
of $x_1-ix_2$ is holomorphic, and that is the content of Lemma~\ref{lem:conformal_end}(3). On the
catenoid $f=\zeta^{-2}$, $g=\zeta$ one has exactly
\[
x_1-ix_2=-\frac{1}{2\zeta}-\frac{\bar\zeta}{2},\qquad
x_1+ix_2=-\frac{1}{2\bar\zeta}-\frac{\zeta}{2},
\]
so $\partial_{\bar\zeta}(x_1-ix_2)=-\tfrac12$: the pole sits in the first combination and the
bounded remainder is anti-holomorphic. On the plane $f\equiv1$, $g\equiv0$ one gets
$x_1+ix_2=\bar\zeta/2$, where $\zeta$ is the plane's own parameter and the puncture coordinate at
infinity is its reciprocal. Which of the two carries the pole depends on the orientation
convention and is immaterial below: only the real span
$\operatorname{span}_{\mathbb R}\{\operatorname{Re}\zeta_\ell^{-1},\operatorname{Im}\zeta_\ell^{-1}\}$
is used, and complex conjugation preserves it.
\end{remark}

\subsection{Kernel mechanics on the exterior of the unit disk}\label{app:kernel_example}

The maximum-principle argument in Lemma~\ref{lem:harmonic} excludes purely decaying Dirichlet-kernel elements: a harmonic function that vanishes on $\partial\Sigma$ and tends to zero on every end vanishes identically. If its growing coefficients vanish but a constant term remains, the function is bounded, extends across the punctures, and the same conclusion follows on the compactification. Thus at order $N_1=1$ the only possible leading kernel terms are $\log r$, $r\cos\phi$ and $r\sin\phi$.

On $C_0=\{r\geq1\}$, the exterior of the unit disk, the mechanism is explicit. The boundary condition at $r=1$ gives
\[
w_0=\log r,\qquad
w_1=(r-r^{-1})\cos\theta,\qquad
w_2=(r-r^{-1})\sin\theta.
\]
For $n\geq2$ the mode $r^{n}$ is not among the principal parts~\eqref{eq:principal_parts} admitted by $N_1=1$, and the remaining decaying coefficient is forced to zero by the boundary condition. Hence $K=\operatorname{span}\{w_0,w_1,w_2\}$. In particular, the notation $w_1=r\cos\phi+O(r^{-1})$ records only its leading asymptotic term, not a pure growing solution.

\subsection{Details for the Fraser--Sargent surfaces}\label{app:FS_details}

This subsection collects the computations supporting Section~\ref{subsec:FS_example}. The surfaces $EFS_{k,l}$ are given (on the end) by
\begin{align*}
(t, \theta) \mapsto \tfrac{1}{r_{k,l}}\bigl(&k\sinh(lt)\cos(l\theta),\; k\sinh(lt)\sin(l\theta),\\
&l\cosh(kt)\cos(k\theta),\; l\cosh(kt)\sin(k\theta)\bigr),
\end{align*}
with $k > l \geq 1$ and $t \in [t_0, \infty)$. Here $t_0 > 0$ is the unique positive root of the orthogonality condition, which in scale-invariant form reads $k\tanh(lt)\tanh(kt) = l$ and so does not involve the normalization, and the constant
\[
r_{k,l} := \bigl(k^2\sinh^2(lt_0) + l^2\cosh^2(kt_0)\bigr)^{1/2}
\]
is chosen so that $|\mathbf{x}| = 1$ on $\{t = t_0\}$; it encodes the half of the free-boundary condition that places $\partial\Sigma$ on $S^3$, the other half being the orthogonality.

\emph{Multiplicity of the end.} A direct computation shows that the coordinates $(t, \theta)$ are conformal: $\langle \mathbf{x}_t, \mathbf{x}_\theta \rangle = 0$ and $|\mathbf{x}_t|^2 = |\mathbf{x}_\theta|^2$, so the induced metric is
\begin{align*}
g &= \lambda(t)\,(dt^2 + d\theta^2),\\
\lambda(t) &= \frac{k^2 l^2}{r_{k,l}^2}\bigl(\cosh^2(lt) + \sinh^2(kt)\bigr) = c\,e^{2kt}\bigl(1 + O(e^{-2(k-l)t})\bigr).
\end{align*}
By Definition~\ref{def:intrinsic_end}, the end of $EFS_{k,l}$ is an intrinsic regular end of multiplicity $m = k$: intrinsically it is asymptotic to a cone of total angle $2\pi k$, not to a plane. Here $r = e^t$ is the \emph{intrinsic} conformal radial coordinate (the variable of Definition~\ref{def:intrinsic_end}); it is \emph{not} the ambient Euclidean radius $|\mathbf{x}|_{\mathbb{R}^4}$, which has order $r^k$ on the end. In this coordinate, the four ambient coordinate functions have the uniform expansions
\[
\begin{aligned}
(x_1,x_2)&=\frac{k}{2r_{k,l}}r^l\bigl(\cos(l\theta),\sin(l\theta)\bigr)+O(r^{-l}),\\
(x_3,x_4)&=\frac{l}{2r_{k,l}}r^k\bigl(\cos(k\theta),\sin(k\theta)\bigr)+O(r^{-k}).
\end{aligned}
\]

\emph{Admissibility.} In the conformal formulation the end is the punctured disk
$\zeta=e^{-(t+i\theta)}$, so $r=e^{t}=|\zeta|^{-1}$ and the growing harmonic modes are
$r^{n}\cos n\theta$, $r^{n}\sin n\theta$, $n\in\mathbb N$, together with $\log r$; these are the
real and imaginary parts of $\zeta^{-n}$, and by~\eqref{eq:multiplicity_admissibility} they are
retained precisely when $n\leq N_1$. The multiplicity $m=k$ does \emph{not} enter $\mathcal H$, the conformal factor being
invisible to it; it is visible only through the largest leading order $r^{k}$ of the
coordinates, hence as the smallest order $N_1$ at which all of them are admissible. At $N_1=1$ the admitted growing modes are exactly $1$, $\log r$ and
$r\cos\theta$, $r\sin\theta$, as in the $\mathbb{R}^3$ analysis: $r^{1}$ is admissible, $r^{2}$
is not. Decaying modes are unconstrained: $\operatorname{Re}\zeta^{n}$ and
$\operatorname{Im}\zeta^{n}$, that is $r^{-n}\cos n\theta$ and $r^{-n}\sin n\theta$, are bounded
at the puncture and lie in $\mathcal H$ at every order. What $N_1$ bounds is the growth retained,
and $q$ records only finitely many coefficients of the regular part.

\emph{Case $l = 1$ (any $k \geq 2$).} The displayed expansions show that $x_1,x_2$ have leading order $r$ and are admissible, whereas $x_3,x_4$ have leading order $r^k$ and are not, so
\[
X = \mathrm{span}\{e_1, e_2\}, \qquad V = \mathrm{span}\{q(x_1), q(x_2)\}.
\]
The boundary circle sits at the unique $t_0 > 0$ determined by the free-boundary condition ($\mathbf{x} \parallel \mathbf{x}_t$ on $\partial\Sigma$, i.e.\ $k\tanh(lt)\tanh(kt) = l$, which has exactly one positive root), so $x_1|_{\partial\Sigma}$ and $x_2|_{\partial\Sigma}$ are nonzero multiples of $\cos(l\theta)$ and $\sin(l\theta)$ --- linearly independent in $L^2(\partial\Sigma)$, so that Theorem~\ref{thm:codim_coord_eigen} applies with $k = 2$, as stated in Section~\ref{subsec:FS_example}. Although the full Fraser--Sargent surface is a M\"obius band when $k$ is even, the restriction $t\in[t_0,\infty)$ has no identification with the negative half-cylinder; hence every exterior piece $EFS_{k,l}$, including $EFS_{2,1}$, is an annulus.

\emph{Higher orders.} Including the first pair $x_1,x_2$, of leading order $r^{l}$, requires
$N_1\geq l$, and $N_1=l$ is the smallest such choice, retaining no \emph{growing} angular
harmonic beyond order $l$. For $l=1$ this is the order~\eqref{eq:adapted_window} itself, so nothing is gained there and
the discussion below concerns $l\geq2$. At $N_1=l$ the constant and logarithmic modes and all
growing angular harmonics of orders $1,\dots,l$ enter the asymptotic data, which then has
dimension $2(2l+1)$. Including also $x_3,x_4$, of leading order $r^{k}$, requires $N_1\geq k$;
the minimal choice $N_1=k$ admits all growing angular harmonics through order $k$ and gives
$\dim\mathcal A=2(2k+1)$. The construction of Section~\ref{sec:compactification} applies at any
order, but for $l\geq2$ the spaces $K$ and $\mathcal A$ and their Lagrangian dimensions are
larger than at $N_1=1$. Transversality and the existence of normalized extensions hold at every
order --- Proposition~\ref{prop:coordinate_eigenfunctions}(iv), Theorem~\ref{thm:krein} and Proposition~\ref{prop:boundary_normalization} --- so
no new theorem is required; only $X$, $k$, $N$ and the choice of normalization change. We do not
carry out either realization here.

\emph{The spectrum of one particular realization at $l=1$.} The orders alone do not determine
the spectrum: a realization is a choice of Lagrangian normalization, and different admissible
choices give different answers. We therefore fix one explicitly and compute only for it. Write
$\mu^{2}=\lambda(t_{0})$; the metric is conformally flat, so $\partial_\nu=-\mu^{-1}\partial_t$
and the harmonic modes are those of the flat cylinder. Define the normalized extension of each
boundary Fourier mode by
\begin{equation}\label{eq:efs_realization}
U_0(t)=1,\qquad
U_n(t)=\begin{cases}
\sinh t/\sinh t_0, & n=1,\\
\cosh(kt)/\cosh(kt_0), & n=k \text{ and } N_1\geq k,\\
e^{-n(t-t_0)}, & \text{otherwise,}
\end{cases}
\end{equation}
the same $U_n$ being used with $\cos n\theta$ and with $\sin n\theta$. This is a symmetric
realization --- the operator is diagonal in the real Fourier basis with real diagonal --- so by
Theorem~\ref{thm:krein} and Proposition~\ref{prop:boundary_normalization} it corresponds to a Lagrangian complement, and it is coordinate
compatible because the admissible coordinates are exactly the modes singled out in the first two
lines.

For this realization: the $n=0$ block gives $\partial_tU_0=0$ and $\sigma=0$; the $n=1$ block
gives $\sigma=-\coth(t_0)/\mu$; the $n=k$ block, when $N_1\geq k$, gives
$\sigma=-k\tanh(kt_0)/\mu$; every other block is decaying and gives $\sigma=n/\mu>0$. Both
$\coth t_0=\mu$ and $k\tanh(kt_0)=\mu$ reduce, on substituting $\mu^{2}=\lambda(t_0)$ and
$r_{k,l}^{2}$, to the same equation $k\tanh(t_0)\tanh(kt_0)=1$, which is the free-boundary
condition; so both coordinate eigenvalues equal $-1$ and the spectrum is, as multisets,
\[
\begin{cases}
\{-1,-1,0\}\cup\{n/\mu,\,n/\mu:n\geq2\}, & N_1<k,\\[2pt]
\{-1,-1,-1,-1,0\}\cup\{n/\mu,\,n/\mu:n\geq2,\ n\neq k\}, & N_1\geq k.
\end{cases}
\]
In particular the negative spectrum is exactly $\{-1\}$, of multiplicity equal to the number of
admissible coordinates, and $\sigma=0$ occurs. The zero is not a codimension phenomenon by
itself: the bounded realization $M=0$ has it on every surface considered here, and so does the
flat example $C_0$ of Example~\ref{ex:flat_disk}. The meaningful comparison is with the
coordinate normalization of a non-planar $C_\alpha$, $\alpha>0$, at $N_1=1$, where the constant
sector \emph{is} occupied --- by the axial coordinate, with eigenvalue $-1$ --- and no zero
occurs. On $EFS_{k,1}$ no coordinate has a constant trace (the pair $x_3,x_4$ sits in the $n=k$ sector), leaving the
constant sector free for~\eqref{eq:efs_realization} to fill with the constant extension.

\emph{Why the choice matters.} Keeping the coordinate blocks and replacing $U_0$ by
$U_0(t)=1+\kappa(t-t_0)$, which is harmonic, admissible for every $N_1\geq1$ and still has trace
$1$, gives $\sigma=-\kappa/\mu$ in the constant sector; the realization is again symmetric, so it
too is Lagrangian, and at $\kappa=\mu/2$ it has the extra negative eigenvalue $-\tfrac12$ while
every admissible coordinate still has eigenvalue $-1$. Likewise on $EFS_{3,1}$ at $N_1=3$ the
sector $n=2$ may be normalized by the growing mode $e^{2(t-t_0)}$, replacing $2/\mu$ by
$-2/\mu$. So the statement above is about~\eqref{eq:efs_realization} and not about
coordinate compatibility alone: what coordinate compatibility fixes is the eigenvalue $-1$ on the
admissible coordinates, not the rest of the spectrum.

\section*{Acknowledgements}

I am grateful to my advisor Vladimir Medvedev for his guidance, for many consultations, and for
his careful reading of and remarks on earlier versions of this paper. I am grateful to Iosif
Polterovich for his consultation and remarks on an earlier version: he pointed out that in
dimension two the exterior problem is best handled by conformal maps to punctured disks, supplied
a short draft in which the Kre\u\i n formula and its spectral consequences were first worked out,
and suggested the comparison with pseudo-Laplacians in Remark~\ref{rem:pseudo_laplacians}.

\section*{Tool and computational resource disclosure}

In accordance with the Leiden Declaration on Artificial Intelligence and
Mathematics, I record the computational tools used in preparing this paper.

\emph{Large language models.} Anthropic's Claude models and OpenAI Codex were used in three
distinct roles. (i)~\emph{Verification}: repeated multi-agent audits of the
manuscript, in which independent agents checked individual sections and
cross-cutting properties (hypothesis tracking through the text, notation
consistency, fidelity of cited statements), followed by adversarial passes in
which further agents attempted to refute each reported defect. (ii)~\emph{Drafting}:
several passages were drafted or redrafted with model assistance, among them the
derivation that a regular end has finite total curvature (Section~\ref{subsec:planar_ends}),
the capping construction of Appendix~\ref{app:huber}, and the conformal
compactification argument of Section~\ref{sec:compactification}. The restructuring of this
version around conformal maps to punctured disks follows a suggestion of I.~Polterovich, who
also supplied a short draft, prepared with model assistance, in which the Kre\u\i n formula and
its spectral consequences were worked out; Sections~\ref{sec:compactification}--\ref{sec:proof_discrete}
were rewritten from that suggestion and checked independently here. (iii)~\emph{Literature search}:
candidate references were located by semantic search over a theorem database and
then checked against the primary sources.

\emph{Responsibility and authorship.} All statements, proofs and citations have
been verified by the author, who retains sole responsibility for their
correctness. No automated system is credited as an author. Model output was
treated throughout as a draft requiring verification rather than as evidence:
the verification passes described above did identify errors in model-drafted
material, which were corrected before this version.

\emph{Attribution.} Where a model-assisted passage reproduces a standard
argument, I have attempted to identify and cite its source. Model output does
not carry provenance, so I cannot exclude that some routine steps coincide with
arguments in the literature that I have failed to attribute; I would be grateful
to have any such omission pointed out.

\end{document}